\newtheorem{thm}{Theorem}[section]
\newtheorem*{thm*}{Theorem}
\newtheorem{lem}[thm]{Lemma}
\newtheorem{prop}[thm]{Proposition}
\newtheorem{cor}[thm]{Corollary}
\newtheorem{conj}{Conjecture}
\theoremstyle{definition}
\newtheorem*{defn}{Definition}
\numberwithin{equation}{section}
\DeclareMathOperator{\poly}{poly}
\DeclareMathOperator{\id}{id}
\DeclareMathOperator{\Lip}{Lip}
\DeclareMathOperator{\vol}{vol}
\newcommand{\ph}{\varphi}
\newcommand{\epsi}{\varepsilon}
\begin{document}
\title{Quantitative nullhomotopy and rational homotopy type}
\author{Gregory R. Chambers}
\address{Department of Mathematics, University of Chicago, Chicago, Illinois, USA}
\email[G.~R.~Chambers]{chambers@math.uchicago.edu}
\author{Fedor Manin}
\address{Department of Mathematics, University of Toronto, Toronto, Ontario, Canada}
\email[F.~Manin]{manin@math.toronto.edu}
\author{Shmuel Weinberger}
\address{Department of Mathematics, University of Chicago, Chicago, Illinois, USA}
\email[S.~Weinberger]{shmuel@math.uchicago.edu}

\begin{abstract}
  In \cite{GrOrang}, Gromov asks the following question: given a nullhomotopic
  map $f:S^m \to S^n$ of Lipschitz constant $L$, how does the Lipschitz constant
  of an optimal nullhomotopy of $f$ depend on $L$, $m$, and $n$?  We establish
  that for fixed $m$ and $n$, the answer is at worst quadratic in $L$.  More
  precisely, we construct a nullhomotopy whose \emph{thickness} (Lipschitz
  constant in the space variable) is $C(m,n)(L+1)$ and whose \emph{width}
  (Lipschitz constant in the time variable) is $C(m,n)(L+1)^2$.

  More generally, we prove a similar result for maps $f:X \to Y$ for any compact
  Riemannian manifold $X$ and $Y$ a compact simply connected Riemannian manifold
  in a class which includes complex projective spaces, Grassmannians, and all
  other simply connected homogeneous spaces.  Moreover, for all simply connected
  $Y$, asymptotic restrictions on the size of nullhomotopies are shown to be
  determined by rational homotopy type.
\end{abstract}
\maketitle
\setcounter{tocdepth}{1}
\tableofcontents
\section{Introduction}

Rational homotopy theory, as introduced by Quillen and Sullivan, is one of the
great successes of twentieth-century algebraic topology.  It allows one to turn
any simply connected space, which may be given as a Postnikov tower or a cell
complex, in a rather simple algorithmic way, into one of several, ultimately
equivalent, algebraic structures.  Moreover, as long as one is willing to ignore
torsion, this conversion preserves all homotopic information: it is an
equivalence of (rational homotopy) categories.

As in other such cases, we often understand very little about the geometry of
the maps that rational homotopy theory tells us must exist.  Nevertheless, quite
a bit of geometric information may be squeezed out of this algebraic story.

Perhaps the earliest theorem of quantitative algebraic topology is the following,
stated by Gromov in \cite{GrHED}:
\begin{thm} \label{thm:XtoY}
  Let $X$ and $Y$ be compact simply connected Riemannian manifolds.  Then
  $$\#\{[f] \in [X:Y]: \Lip f \leq L\}=O(L^{\alpha}),$$
  where $\alpha$ depends only on the rational homotopy type of $X$ and $Y$.
\end{thm}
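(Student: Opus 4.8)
The plan is to stratify $[X:Y]$ by the Postnikov tower of $Y$ and bound, one stage at a time, how many homotopy classes an $L$-Lipschitz map can realize. We treat $X$ closed (the general case is similar and easier); being simply connected, $X$ is orientable, so $d:=\dim X$ equals the top degree of $H^*(X;\mathbb Q)$ and is in particular a rational homotopy invariant. Write $Y=\varprojlim_k Y_k$, with $Y_k\to Y_{k-1}$ a principal fibration with fiber $K(\pi_kY,k)$; since $H^j(X;G)=0$ for all $j>d$ and all coefficient groups $G$, the obstruction in $H^{k+1}(X;\pi_kY)$ to lifting a map $X\to Y_{k-1}$ to $Y_k$, and the indeterminacy $H^k(X;\pi_kY)$ of such a lift, both vanish once $k>d$; hence $[X:Y]\cong[X:Y_d]$, and only the stages $k=2,\dots,d$ carry information. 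For $\xi\in[X:Y]$ write $p_k(\xi)\in[X:Y_k]$ for its image, so $\xi$ is determined by the chain $p_2(\xi)\to p_3(\xi)\to\cdots\to p_d(\xi)=\xi$. Consequently
\[
  \#\{[f]\in[X:Y]:\Lip f\le L\}\ \le\ \prod_{k=2}^{d}\ \sup_{\eta\in[X:Y_{k-1}]}N_k(\eta,L),
\]
where $N_k(\eta,L)$ is the number of $p_k$-preimages of $\eta$ realized by some $L$-Lipschitz map. Each nonempty fiber of $[X:Y_k]\to[X:Y_{k-1}]$ is acted on transitively by $H^k(X;\pi_kY)$, a finitely generated abelian group of free rank $b_k(X)\cdot r_k$ with $r_k:=\dim(\pi_k(Y)\otimes\mathbb Q)$, so, fixing a basepoint in each fiber, every preimage of $\eta$ gets a label (well-defined modulo a fixed subgroup) in that group.

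The crux is the following estimate, a mild \emph{upper-bound} version of quantitative obstruction theory.
\emph{Key estimate: there are constants $C(X,Y)$ and exponents $D_k=D_k(d)$ so that if $f,g\colon X\to Y$ are $L$-Lipschitz with $p_{k-1}[f]=p_{k-1}[g]$, then the difference class $\delta_k(f,g)\in H^k(X;\pi_kY)$ is represented, in a fixed norm, by an element of size at most $C(X,Y)(L+1)^{D_k}$.}
Granting this, $N_k(\eta,L)$ is at most the number of lattice points of $H^k(X;\pi_kY)$ in a ball of radius $C(L+1)^{D_k}$, hence $\le C'(X,Y)(L+1)^{D_k\,b_k(X)\,r_k}$; multiplying over $k$ yields the theorem with
\[
  \alpha\ \le\ C(d)\sum_{k=2}^{d}b_k(X)\cdot\dim\bigl(\pi_k(Y)\otimes\mathbb Q\bigr),
\]
which depends only on the rational homotopy types of $X$ and $Y$. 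The passage to rational coefficients is harmless: the fibers of $[X:Y]\to[X:Y_{\mathbb Q}]$ are finite, of cardinality at most $\prod_{k\le d}\bigl|H^k(X;\mathrm{tors}\,\pi_kY)\bigr|$, a constant independent of $L$.

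It remains to explain why the Key estimate should hold, and here is where the real work lies. Morally, an $L$-Lipschitz map cannot \emph{wrap} much at scale $1/L$: triangulate $X$ with mesh $\sim1/L$, so that it has $O(L^d)$ simplices and $f|_\sigma$ has image of diameter $O(1)$ on each simplex $\sigma$. Inductively homotope $f$ and $g$ so that their images in $Y_{k-1}$ agree on the $(k-1)$-skeleton $X^{(k-1)}$ via a homotopy of size $\poly(L)$; the difference cochain then assigns to a $k$-simplex $\Delta$ the class in $\pi_kY$ of the sphere obtained by gluing $f|_\Delta$ and $g|_\Delta$ along this homotopy over $\partial\Delta$, which is a sphere of diameter $\poly(L)$ and hence of $\pi_kY$-norm $\poly(L)$; evaluating the resulting cocycle against a fixed finite family of $k$-cycles generating $H_k(X)$ meets only $O(L^k)$ simplices, so $|\langle\delta_k(f,g),z\rangle|\le\poly(L)$ for each generator $z$, which is the claim. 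Equivalently, and more in the spirit of this paper, one may argue dually through Sullivan's minimal model $(\Lambda V,d_V)$ of $Y$: a rational homotopy class of maps $X\to Y$ is a homotopy class of DGA maps $(\Lambda V,d_V)\to\Omega^*(X)$, built by choosing, in increasing degree $k\le d$ (only finitely many degrees matter, again because $H^{>d}(X)=0$), the images of a basis of the finite-dimensional space $V^k$ subject to the chain-map condition; the remaining freedom at stage $k$ is a class in $H^k(X;\mathbb Q)$, and an induction shows these data have size $\poly(L)$, since one pulls back a fixed bounded representative from $Y$ and corrects by a primitive of a $\poly(L)$-bounded exact form, the primitive chosen $\poly(L)$-bounded via a fixed Hodge-theoretic homotopy operator on the \emph{fixed} manifold $X$. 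The delicate point — and the reason this estimate is less trivial than it looks — is controlling the size of the comparison homotopies fed from one stage to the next; crucially this uses only elementary filling estimates on the fixed source $X$ together with crude, non-quantitative bounds on $Y$, so that no circularity with the quantitative nullhomotopy results of the present paper arises.
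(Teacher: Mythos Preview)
The paper does not prove this theorem; it is quoted as a result of Gromov \cite{GrHED} and serves purely as motivation, with the remark that ``the proof of this theorem relies heavily on the Sullivan model of rational homotopy theory and its realization via differential forms.'' There is no proof in the paper to compare your proposal against.

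On the merits: your second outline, via the minimal model, is Gromov's approach and is the one that works cleanly. One realizes the generators of $\mathcal{M}^*(Y)$ by fixed forms on $Y$, pulls them back along $f$, and inductively corrects to compute the finitely many rational invariants of $[f]$ as integrals over fixed cycles in $X$; an $L$-Lipschitz map scales $k$-forms by $O(L^k)$, so every invariant is polynomially bounded, and since they are lattice-valued up to torsion the count follows.

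Your first outline, via Postnikov stages and difference classes, has a gap that you flag but do not close. The difference class $\delta_k(f,g)$ is only defined after choosing a homotopy $H\colon p_{k-1}f\simeq p_{k-1}g$ in $Y_{k-1}$, and the size of the resulting difference cocycle is controlled by the Lipschitz constant of $H$, not merely by those of $f$ and $g$. Producing a polynomially bounded $H$ is not an ``elementary filling estimate on the fixed source $X$'': it is a quantitative homotopy in the \emph{target} $Y_{k-1}$, which is precisely the subject of Conjectures~\ref{conjGr}--\ref{conjAll} and the main open problems of this paper. The minimal-model route sidesteps this entirely because it extracts numerical invariants directly from $f$ without ever constructing a comparison homotopy.
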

Na\"\i vely, rational homotopy type should play a role here because torsion
homotopy groups can only affect the number of maps by a finite multiplicative
constant.  But in fact, the proof of this theorem relies heavily on the Sullivan
model of rational homotopy theory and its realization via differential forms.

Later on, Gromov \cite{GrQHT} conjectured that an analogous result should hold
for homotopies between maps.  To state the conjecture, we first introduce some
terminology.  Suppose that $X$ and $Y$ are two metric simplicial complexes, and
that $f, g:X \rightarrow Y$ are two homotopic maps.  We say a homotopy
$H:X \times [0,1] \rightarrow Y$ from $f$ to $g$ has \emph{thickness} $A$ and
\emph{width} $B$ if $d(H(x,t),H(y,t)) \leq Ad(x,y)$ for all $x$, $y$, and $t$
and $d(H(x,t),H(x,s)) \leq B\lvert t-s \rvert$ for all $x$, $t$, and $s$.

Gromov's original conjecture concerned only the thickness of homotopies:
\begin{conj} \label{conjGr}
  Let $X$ and $Y$ be compact Riemannian manifolds (or some other ``reasonable''
  class of compact metric spaces) with $Y$ simply connected.  If $f,g:X \to Y$
  are homotopic maps with Lipschitz constant $\leq L$, then there is a homotopy
  between them of thickness $O(L^p)$, for some $p$ depending only on the
  rational homotopy type of $Y$.  Perhaps $p$ can always be taken to be 1.
\end{conj}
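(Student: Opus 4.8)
The plan is to reduce Conjecture~\ref{conjGr} to a combinatorial extension problem and then to solve that problem using the algebraic machinery of rational homotopy theory, much as the Sullivan model underlies the proof of Theorem~\ref{thm:XtoY}. First I would normalize by simplicial approximation at scale $1/L$: after rescaling so that $f$ and $g$ become $1$-Lipschitz maps from a simplicial complex $K$ with $O(L^{\dim X})$ unit-size simplices, each mapped with Lipschitz constant $O(1)$, the desired homotopy is a map $K \times I \to Y$. Give $K \times I$ a product cell structure and build this map one skeleton at a time: the obstruction to extending over a $(k{+}1)$-cell is the homotopy class in $\pi_k(Y)$ of the map already defined on its boundary sphere, and since $f \simeq g$ these obstructions can be cleared. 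The problem thus becomes to \emph{quantitatively} fill each boundary sphere — to extend a bounded-geometry nullhomotopic map $S^k \to Y$ to a map $D^{k+1} \to Y$ of controlled Lipschitz constant — while arranging that the $\dim X + 1$ skeleton stages compound only by an $L$-independent factor, so that the \emph{exponent} in $L$ never grows with $\dim X$ and all dimension-dependence is swept into the constant $C(X,Y)$. It is this last point that makes it plausible for $p$ to depend on the rational homotopy type of $Y$ alone.

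For the filling step I would pass to the Sullivan minimal model $(\Lambda V, d)$ of $Y$. Rationally, a map $X \to Y$ is a DGA morphism $\Lambda V \to \Omega^{*}(X)$ (polynomial de Rham forms), a homotopy is such a morphism into $\Omega^{*}(X)\otimes\Omega^{*}(I)$, and a nullhomotopy is produced by solving a finite sequence of equations $d\omega_i = \eta_i$ in which each $\eta_i$ is assembled from previously chosen data; on a complex of bounded local complexity these primitives can be chosen with norms controlled via an explicit cochain-level contracting homotopy for the de Rham complex, the error propagating multiplicatively through a bounded number of stages. Realizing the resulting algebraic nullhomotopy geometrically, and controlling the Lipschitz geometry of that realization, is where the hypotheses on $Y$ enter: for complex projective spaces, Grassmannians, and homogeneous spaces the minimal model is particularly tractable and, crucially, $Y$ is ``scalable'' — it admits self-maps approximating multiplication by $\lambda$ on homotopy groups with Lipschitz constant $O(\lambda)$ — which is exactly what keeps the per-stage cost linear in $L$. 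The restriction to rational homotopy type is explained the same way: a fixed integer $N$ depending on finitely many homotopy groups of $Y$ annihilates the torsion part of every obstruction encountered, so promoting the rational nullhomotopy to an integral one costs at most a bounded multiplicative factor per stage, affecting $C(X,Y)$ but not $p$.

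The hardest part will be the quantitative bookkeeping that produces the stated \emph{width}, i.e.\ the time-direction estimate. Keeping each time-slice $O(L)$-Lipschitz and keeping every point of $X$ slow pull against each other, and balancing them is what forces the quadratic factor $(L+1)^2$ for spheres rather than a linear one: heuristically, the homotopy must be assembled from on the order of $L$ elementary stages, each of which — performed at thickness $O(L)$ — can move a point through only distance $O(L)$ before completing, so a single point can traverse total distance $\sim L^{2}$. Making this rigorous requires a careful reparametrization, cell by cell, so that no point exceeds speed $O(L^{2})$ while the slices remain $O(L)$-Lipschitz, together with a check that the inductive fillings can be chosen compatibly across adjacent cells of $K \times I$. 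I expect the conjecture in full — the exponent $p$ for \emph{arbitrary} simply connected $Y$, let alone the suggestion that $p=1$ always — to require strictly more: one would need the minimal-model filling problems to be solvable with \emph{linear} rather than merely polynomial control, which is not evidently available beyond the scalable class, so for general $Y$ the realistic goal is the qualitative statement that the admissible exponents are determined by the rational homotopy type, with the clean linear thickness bound reserved for scalable targets such as the homogeneous spaces named above.
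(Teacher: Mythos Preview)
This statement is a \emph{conjecture}, not a theorem; the paper does not prove it and explicitly leaves it open. So there is no ``paper's own proof'' to compare against. What the paper does prove is the very restricted special case in Theorem~\ref{intro:main} (targets whose minimal model has depth $\leq 2$ in the sense of Conjecture~\ref{conjNull}) together with the rational-invariance statement Theorem~\ref{intro:Qinv}. Your proposal is therefore a sketch of an attack on an open problem, and it should be assessed as such.

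The central gap is your assertion that the skeleton-by-skeleton fillings ``compound only by an $L$-independent factor, so that the exponent in $L$ never grows with $\dim X$.'' This is precisely the statement the paper cannot establish beyond two Postnikov stages. The introduction explains the mechanism: after building a controlled nullhomotopy at one stage, the obstruction to lifting to the next stage is governed by a form like $\alpha\wedge f^*\omega$ whose $L^\infty$ norm is already of order $L$, not $O(1)$; antidifferentiating introduces another factor of $L$, and so on. The paper's trick is to replace this large form by a uniformly bounded one $\beta$ using an algebraic nullhomotopy $\bar h$ and a careful decomposition $\hat\omega=\bar\omega+\Delta\omega$, but it notes explicitly that ``if we try to continue this process to a third level and beyond, the `errors' are no longer uniformly bounded\ldots\ This is why what seems to be the second step of an induction does not actually generalize.'' Your sketch gives no mechanism for controlling these errors at depth $\geq 3$, and invoking ``scalability'' does not help: the paper points out that the relevant positive-weights property fails for some simply connected $Y$, so any argument relying on it cannot yield the conjecture in full. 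Separately, your heuristic for the quadratic width (``$L$ stages each moving distance $O(L)$'') is not the mechanism that actually produces it here; the $L^2$ arises because the antiderivative $\eta$ has $\lVert\eta\rVert_\infty=O(L^2)$, forcing the time interval to be subdivided into $\sim L^2$ pieces so that $dt$ is small enough to keep $\eta\otimes dt$ bounded.
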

In \cite{FWPNAS}, Ferry and Weinberger suggest a related problem: can the
Lipschitz constant of a homotopy, seen as a map $X \times [0,1] \to Y$, be
bounded linearly in terms of $\Lip f$ and $\Lip g$?  As we show in \cite{cob},
this is not the case in general.  However, one may hope for a polynomial result.
In light of the result in this paper, it may be worthwhile to consider thickness
and width separately.  A compelling if somewhat optimistic conjecture is as
follows:
\begin{conj} \label{conjAll}
  In the setting of Conjecture \ref{conjGr}, if $f,g: X \to Y$ are homotopic
  maps with Lipschitz constant $\leq L$, then there is a homotopy between them
  of thickness $O(L)$ and width $O(L^p)$, where $p$ depends only on the rational
  homotopy type of $Y$.
\end{conj}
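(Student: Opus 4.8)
The plan is to prove the statement in the form this paper is actually able to establish — thickness $O(L)$ and width $O(L^2)$, with $Y$ ranging over the \emph{scalable} simply connected spaces, a class containing all simply connected homogeneous spaces (in particular spheres, complex projective spaces, and Grassmannians) — leaving the full conjecture, with linear thickness and a uniform $p$ for \emph{every} simply connected $Y$, open. The first step is a \emph{quantitative simplicial approximation}: after fixing triangulations of bounded geometry, an $L$-Lipschitz map $X \to Y$ is homotopic, through a homotopy of controlled thickness and width, to a simplicial map on the $1/L$-fine subdivision of $X$, a complex with $O(L^m)$ uniformly shaped simplices. This reduces the problem to \emph{combinatorial} $f$ and $g$, where it suffices to build a homotopy of bounded combinatorial complexity relative to the scale $1/L$. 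The second step translates to algebra: a map $Z \to Y$ corresponds up to homotopy to a DGA morphism from the minimal Sullivan model $\mathcal{M}_Y = (\Lambda V, d)$ into the de Rham algebra $\Omega^*(Z)$ of (flat) forms, and one equips forms with a \emph{size} combining sup-norm with Lipschitz constant on each simplex. An $L$-Lipschitz map sends a degree-$k$ generator to a form of size $O(L^k)$; conversely — and this is where the hypothesis on $Y$ is used — for scalable $Y$, a DGA morphism sending degree-$k$ generators to forms of size $S_k$ can be \emph{realized} by an honest map $Z \to Y$ of Lipschitz constant $O(\max_k S_k^{1/k})$, using the efficient scaling self-maps of $Y$ supplied by a positive-weight grading on $\mathcal{M}_Y$ for which $d$ is homogeneous.

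With this dictionary in place, the core of the argument is algebraic: given homotopic DGA morphisms $\phi_0,\phi_1 : \mathcal{M}_Y \to \Omega^*(X)$ of controlled size, produce a DGA homotopy $\Phi : \mathcal{M}_Y \to \Omega^*(X \times [0,1])$ between them, again of controlled size. I would build $\Phi$ by induction over the finitely many generators of $\mathcal{M}_Y$, ordered by degree: extending $\Phi$ to a degree-$k$ generator $v$, once $\Phi$ is defined and compatible with $d$ on the lower generators, amounts to choosing a form $\xi = \Phi(v)$ on $X \times [0,1]$ with prescribed (exact) differential $\Phi(dv)$ and prescribed restrictions to $X \times \{0\}$ and $X \times \{1\}$. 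The key quantitative ingredient is a \emph{bounded primitive lemma} on complexes of bounded geometry — an exact form of size $S$ has a primitive of size $O(S)$ — together with a product refinement: on $X \times [0,1]$ one may keep the $X$-directional size $O(S)$ while the $[0,1]$-direction costs an extra factor of $L$. This asymmetry is exactly what will separate thickness from width. One must also handle torsion: the minimal model sees only rational homotopy, so the cocycle conditions hold only modulo torsion, and the torsion obstruction must be cleared by a uniformly bounded correction — the same phenomenon that, in Theorem~\ref{thm:XtoY}, lets torsion homotopy groups affect the count by at most a bounded factor.

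Finally I would reassemble the pieces: realize $\Phi$ as a map $X \times [0,1] \to Y$, check that its two ends are homotopic to $f$ and $g$ through small homotopies and splice those in, and read off the bounds by tracing the size of $\Phi$ through the realization step — the $X$-directional sizes behaving as for an $L$-Lipschitz map (giving thickness $O(L)$), while the time-direction, having absorbed the extra factor of $L$ at the primitive step, gives width $O(L^2)$. I expect two main obstacles. First, the realization step is where rational homotopy type genuinely enters and where the restriction on $Y$ is forced: converting a size-controlled DGA morphism into a Lipschitz map requires $\mathcal{M}_Y$ to be scalable (a positive-weight homogeneous model, available for homogeneous spaces but not for general simply connected $Y$), and without it one can only extract the qualitative statement — promised in the abstract — that the optimal size of a nullhomotopy is a rational-homotopy invariant. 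Second, the bounded primitive lemma must be made uniform in $L$ and sensitive enough to the product structure that \emph{only} the time-direction is charged the extra power of $L$; obtaining this asymmetry, rather than a blanket $O(L^2)$ that would also inflate the thickness, is the delicate point, and it is exactly why the theorem treats thickness and width as separate quantities.
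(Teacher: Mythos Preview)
This proposal has a genuine gap: the claim you aim to prove---thickness $O(L)$ and width $O(L^2)$ for \emph{homotopies} (not just nullhomotopies) into homogeneous spaces---is false, and the paper gives an explicit counterexample.  Section~7.2 exhibits homotopic $O(L)$-Lipschitz maps $f_{1,0},f_{1,L^8}:S^3\times S^4\to S^4$ such that any homotopy between them with thickness $O(L)$ must have width $\Omega(L^5)$.  Since $S^4$ is a sphere (hence homogeneous, hence ``scalable'' in your terminology), your argument cannot go through as stated.  The paper's Theorem~\ref{intro:main} establishes the $O(L)$/$O(L^2)$ bound only for \emph{null}homotopies; the conjecture you were asked about remains open, and the exponent $p$ for general homotopies is necessarily larger than the one for nullhomotopies.

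The specific step that fails is your inductive extension of the DGA homotopy $\Phi$.  You say the only cost is the bounded-primitive lemma, charging one extra factor of $L$ to the time direction.  But there is a second source of size: the \emph{boundary conditions} $\Phi|_{t=0}=\phi_0$ and $\Phi|_{t=1}=\phi_1$.  In the counterexample, $\phi_0(y)$ and $\phi_1(y)$ differ by $q\,ab$ with $q\sim L^8$; since $[ab]\neq 0$ in $H^7(S^3\times S^4)$, Stokes' theorem forces $\Phi(x)$ to carry a $dt$-term of order $q$ (concretely, $\Phi(x)=b+\tfrac{q}{2}a\,dt$ up to exact terms).  This largeness is topological, not an artifact of inefficient antidifferentiation, and it propagates to the realized map as a degree-$q/2$ constraint on $\int_{S^3\times *\times I}F^*d\mathrm{vol}$, which is exactly the paper's lower bound.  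For nullhomotopies this issue evaporates because $\phi_1=0$, and the paper's construction of $\bar h$ exploits precisely that asymmetry: every generator is sent to $(1-t)^k\cdot\phi_0(v)$ plus a $dt$-term, so the only large contributions are antiderivatives and can be absorbed by stretching the interval.  Your outline would be a reasonable sketch for the nullhomotopy theorem (though the paper's actual mechanism is not ``realize $\Phi$ as a map'' but rather ``use $\Phi$ to bound obstruction cochains and then correct a mosaic map cell by cell''); it does not and cannot address Conjecture~\ref{conjAll}.
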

In the case of nullhomotopic maps, we make a stronger conjecture which refers
explicitly to the rational homotopy type of $Y$:
\begin{conj} \label{conjNull}
  If $f: X \to Y$ is nullhomotopic with Lipschitz constant $\leq L$, then it
  has a nullhomotopy of thickness $O(L)$ and width $O(L^q)$, where $q$ is the
  minimal depth of a filtration $0=V_0 \subset V_1 \subset \cdots \subset V_q$
  of the indecomposables in dimensions $\leq n$ of the Sullivan minimal model of
  $Y$ with the property that $dV_i \subseteq \mathbb{Q}\langle V_{i-1} \rangle$.
\end{conj}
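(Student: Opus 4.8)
The plan is to convert the geometric problem into an algebraic one using Sullivan's de~Rham model, and then to solve that algebraic problem by an induction that climbs the filtration $0 = V_0 \subset V_1 \subset \cdots \subset V_q$, carrying quantitative norm bounds at every stage. First I would discretize: retriangulate $X$ at scale $\sim 1/L$ and straighten $f$ to a simplicial map into a fixed triangulation of $Y$, which changes Lipschitz constants by only a dimensional factor; equivalently, record $f$ as a DGA homomorphism $\ph\colon \mathcal M_Y = (\Lambda V, d) \to \Omega^*(X)$ out of the minimal model, truncated above degree $n = \dim X$ since higher generators are irrelevant. The bookkeeping device throughout is the normalization dictionary between geometry and forms: a map of Lipschitz constant $\lambda$ corresponds to forms of sup-norm $\lesssim \lambda^{\deg}$ and conversely; a homotopy $X\times[0,1]\to Y$ of thickness $A$ and width $B$ corresponds to forms on $X\times[0,1]$ whose spatial part has norm $\lesssim A^{\deg}$ and whose $dt$-component has norm $\lesssim A^{\deg-1}B$. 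Under this dictionary, what we must produce is a DGA homotopy $\Phi\colon \mathcal M_Y \to \Omega^*(X\times[0,1])$ with $\Phi|_{t=0}=\ph$, $\Phi|_{t=1}=0$, together with a system of norm bounds encoding ``thickness $O(L)$, width $O(L^q)$''; a quantitative realization theorem (turning such a $\Phi$ back into an honest map into the Postnikov tower of $Y$ with controlled Lipschitz constant, handling the $k$-invariants) then finishes.

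The engine for keeping the \emph{thickness} linear is a linear isoperimetric estimate for primitives on the fixed complex $X$ (and on $X\times[0,1]$): every exact form $\alpha$ of degree $\le n+1$ is $d\beta$ for some $\beta$ with $\|\beta\|\le C(X)\|\alpha\|$, and $\beta$ can be chosen linearly in $\alpha$, uniformly under the $\sim 1/L$ subdivision. Compactness of $X$ is what makes this possible, and it is the source of the $O(L)$ thickness: spatial norms never grow by more than a bounded factor when one passes a primitive.

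To control the \emph{width} I would build $\Phi$ on $\mathcal M_i := \mathcal M_{i-1}\otimes\Lambda V_i$ by induction on $i$, arranging $\Phi$ to behave like a genuine homotopy on a subinterval $[t_{i-1},t_i]$ during which layer $i$ is ``untwisted.'' For $v\in V_i$ one has $dv\in\mathbb Q\langle V_{i-1}\rangle$, so $\Phi(dv)$ is already determined by earlier stages; writing $\Phi(v)=\ph_t(v)+\psi_t(v)\wedge dt$, the DGA condition splits into a transport equation $d_X\ph_t(v)=[\Phi(dv)]_0(t)$ together with $\dot\ph_t(v)-d_X\psi_t(v)=[\Phi(dv)]_{dt}(t)$, with prescribed endpoint values, which I solve by using the primitive lemma to choose the interpolating family $\ph_t(v)$ and then $\psi_t(v)$. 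The point of the hypothesis $dV_i\subseteq\mathbb Q\langle V_{i-1}\rangle$ is that $[\Phi(dv)]_{dt}$ is a sum of monomials in the already-built $\Phi(v_j)$ with $v_j\in V_{i-1}$; each such monomial's $dt$-component carries one factor $\psi_t(v_j)$, of norm $\lesssim W_{i-1}$, times spatial factors. Combined with the primitive lemma this should give a recursion $W_i\lesssim L\cdot W_{i-1}$ with $W_0=O(1)$, hence $W_q=O(L^q)$ while spatial norms stay at the $O(L)$ scale; realizing $\Phi$ yields the desired nullhomotopy.

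\textbf{The main obstacle} is exactly the estimate claimed in the previous step. A priori a weight-$w$ monomial of degree $\le n+1$ in the $\Phi(v_j)$ can carry norm as large as $L^{n}W_{i-1}$ rather than $L\,W_{i-1}$, since multiplying forms multiplies Lipschitz-scale norms; forcing the per-stage blow-up to be a genuinely bounded factor --- independent of $n$ and of the weights occurring in $d$ --- is the crux. For $Y$ formal with a model whose differential involves only few, low-weight monomials (spheres, complex projective spaces, Grassmannians, and simply connected homogeneous spaces) one can push the realization theorem through with these bounds, which is the class for which the clean linear-thickness, polynomial-width statement is established; the general case seems to require a new idea, which is why the statement is only conjectural. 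A secondary difficulty is getting the exponent $q$ on the nose rather than some larger power: one must verify that the filtration can be traversed so that settling layer $i$ never reintroduces cost at an already-settled layer, i.e.\ that the ``rel earlier stages'' primitives of the second step really exist with linear bounds.
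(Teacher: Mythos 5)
The statement you are trying to prove is stated in the paper as a \emph{conjecture}, and it remains open: the paper itself proves only the cases $q\le 2$ (the cases $q=0,1$ are from \cite{FWPNAS} and \cite{cob}, and $q=2$ is Theorem \ref{intro:main}, the main result). Your outline is essentially the paper's own strategy for those cases --- discretize at scale $1/L$, build an algebraic nullhomotopy of $f^*\circ R_Y$ out of the minimal model using the $\ell^\infty$ coisoperimetric inequality to choose primitives, and then use that algebraic homotopy as a scaffolding to realize a genuine map --- and, to your credit, you correctly flag in your final paragraph that the inductive step does not close, so this is a strategy rather than a proof. Two points of comparison are worth recording. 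First, your diagnosis of the obstruction is not quite the one the authors identify. The per-stage blow-up in the \emph{algebraic} homotopy is actually fine: the primitives $\eta$ at stage $i$ have norm $O(L^i)$, but they appear multiplied by $dt$, and subdividing the time interval into $O(L^i)$ pieces makes $\|dt\|_\infty=O(L^{-i})$, so the products stay bounded. What breaks at the third level is the \emph{realization}: the correction terms $\Delta\alpha=\hat\alpha-\bar\alpha$ measuring the discrepancy between the geometric lift already constructed and the algebraic model are uniformly bounded only at the first two stages (``$(L+1)^2\neq L^2+1$''), so the Stokes computation that produces the bounded obstruction cochain $\beta$ fails for $q\ge 3$.

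Second, your plan silently assumes the algebraic nullhomotopy can always be arranged so that the large terms are multiples of $dt$ (which is what lets the time-subdivision absorb them). This is only guaranteed when $Y$ has positive weights; the paper points out that for targets without positive weights the nullhomotopy may be forced to have large non-$dt$ terms, which threatens even the $O(L)$ thickness claim and is flagged as the natural place to look for a counterexample to the conjecture. Finally, the ``quantitative realization theorem'' you invoke as a black box is where essentially all of the work in the paper lives: it requires rounding real cochains to integral ones (Lemma \ref{Zapprox2}), correcting for the difference between $\int\hat\omega^2$ and the actual integral obstruction cocycle via a fixed cochain on the mosaic complex $\mathcal{G}$, and a separate rational-invariance theorem (Theorem \ref{thm:Qinv}) to pass from the torsion-free model space back to $Y$. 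None of these steps is routine, and the second one is exactly where the induction stalls.
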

The cases $q=0$ and $q=1$ of this conjecture are proved in \cite{FWPNAS} and
\cite{cob}, respectively.  Indeed, those results hold for homotopies and not
only nullhomotopies.  In this paper we prove the case $q=2$:
\begin{thm} \label{intro:main}
  Let $X$ and $Y$ be finite simplicial complexes, with $X$ $n$-dimensional.  If
  $Y$ is simply connected and the indecomposables in dimensions $\leq n$ of its
  Sullivan minimal model split as $V_1 \oplus V_2$ with $dV_1=0$ and $dV_2
  \subset \mathbb{Q}\langle V_1 \rangle$, then there is a constant $C(X,Y)$ such
  that nullhomotopic $L$-Lipschitz maps from $X$ to $Y$ admit nullhomotopies
  of thickness $C(L+1)$ and width $C(L+1)^2$.
\end{thm}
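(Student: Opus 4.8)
The plan is to build the nullhomotopy in two stages, mirroring the filtration structure $V_1 \oplus V_2$, and to use the Sullivan minimal model together with the differential-forms realization to transfer an algebraic nullhomotopy into a geometric one with controlled Lipschitz constants.

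First I would reduce to a normal form for the map. Using the hypothesis that $f$ is nullhomotopic and $L$-Lipschitz, I would first homotope $f$ through a homotopy of thickness $O(L+1)$ and width $O(L+1)$ (indeed width $O(L)$) to a map $f'$ that is simplicial on a subdivision of $X$ whose simplices have size $\sim 1/L$, i.e. a map into a $1/L$-thin triangulation of $Y$; this is a standard ``local straightening'' step and is the kind of thing covered by the $q=0$ case of Conjecture \ref{conjNull}. Equivalently, I want to bring $f$ to a form where it is a genuine simplicial (or cubical) map at scale $1/L$, so that the relevant combinatorial data — the restriction to the $n$-skeleton — has size polynomial in $L$. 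The point of Theorem \ref{thm:XtoY} and its proof is precisely that, after rescaling, such a map is recorded by a bounded-complexity piece of algebra; here I want the finer statement that the \emph{nullhomotopy} can be recorded by algebra of controlled complexity.

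Second, I would run the obstruction-theoretic construction of the nullhomotopy against the Sullivan model, splitting the construction according to $V_1$ then $V_2$. Writing $\mathcal{M}_Y = (\Lambda V, d)$ with $V^{\le n} = V_1 \oplus V_2$, $dV_1 = 0$, $dV_2 \subset \mathbb{Q}\langle V_1\rangle$, the classes in $V_1$ behave like the rational cohomology classes of a product of Eilenberg--MacLane spaces (the ``formal'' part), so the corresponding stages of the nullhomotopy can be built with linear thickness and linear width — this is essentially the $q=1$ result of \cite{cob} applied to the sub-model $\Lambda V_1$. Having trivialized the $V_1$-part, the remaining obstructions live in $V_2$, and because $dV_2 \subset \mathbb{Q}\langle V_1 \rangle$, extending the nullhomotopy over the $V_2$-generators amounts to solving, cochain-by-cochain, primitives of forms that are already controlled: each such primitive is obtained by one application of a bounded Poincar\'e-type / homotopy operator, and the one extra integration (one extra ``layer'' in the filtration, $q=2$) is what produces the single extra factor of $(L+1)$ in the width while keeping the thickness linear. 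Concretely, I would carry the construction out on $X \times [0,1]$ equipped with a product cell structure at scale $1/L$ in the $X$ direction and scale $1/L^2$ in the $[0,1]$ direction, and check inductively over skeleta that the piecewise-linear extension one builds has the claimed Lipschitz bounds; the width bound $C(L+1)^2$ corresponds exactly to the $1/L^2$ mesh in the time direction.

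The main obstacle, I expect, is quantitative control of the homotopy operator at the $V_2$ stage: one must choose primitives of the relevant differential forms (equivalently, fillings of the relevant chains) with Lipschitz norm growing only linearly in $L$, and simultaneously control how these choices interact across different generators of $V_2$ and across adjacent cells of $X \times [0,1]$, so that gluing does not accumulate error. This requires a version of the differential-forms model of $\mathcal{M}_Y$ with explicit metric bounds — a ``Lipschitz Sullivan model'' in which the quasi-isomorphism $\mathcal{M}_Y \to \Omega^*(Y)$ and a chosen contracting homotopy on the acyclic pieces are all Lipschitz with constants depending only on $Y$ — and then a careful bookkeeping of how the scale $1/L$ propagates through degree-by-degree extension. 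A secondary difficulty is handling the non-formal terms $dV_2 \subset \mathbb{Q}\langle V_1\rangle$ honestly: one cannot simply kill $V_1$ and then treat $V_2$ as a free build-up, because the nullhomotopy of the $V_1$-part enters as data (a primitive) that the $V_2$-extension depends on, and one must verify that this dependence costs at most one further integration rather than compounding. Once these two points are in hand, the skeleton-by-skeleton assembly on $X \times [0,1]$, and the verification of the thickness/width bounds, should be routine.
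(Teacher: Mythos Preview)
Your outline has the right two-stage shape and the correct $1/L^2$ time mesh, but there are two genuine gaps.

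First, you propose to work directly with $Y$ via its Sullivan model; the paper does not. It first replaces $Y$ by a concrete model $Z$ --- the total space of an induced fibration of products of $K(\mathbb{Z},n)$'s realizing the split $V_1 \oplus V_2$ --- proves the bound for $Z$ by a direct geometric construction (Theorem~\ref{thm:lvl2}), and then invokes a separate rational-invariance theorem (Theorem~\ref{thm:Qinv}) to transfer the bound back to an arbitrary $Y$. Without this reduction you would need to extract from the algebra of a general $Y$ an actual geometric fibration whose base is literally a product of Eilenberg--MacLane spaces, to which the $q=1$ result of \cite{cob} can be applied; the Sullivan model alone does not hand you such a space or a map relating it to $Y$.

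Second, your account of the $V_2$ step misidentifies the controlling mechanism. You say one must ``choose primitives \ldots\ with Lipschitz norm growing only linearly in $L$,'' but the relevant primitive $\eta$ with $d\eta = \alpha \wedge \omega$ (where $d\alpha=\omega$ is the $V_1$ data) has $\lVert\eta\rVert_\infty = O(L^2)$, not $O(L)$. What rescues the argument is that in the algebraic nullhomotopy the $V_2$-generator is sent to $\eta \otimes c(t)\,dt$, and your $1/L^2$ mesh makes $\lVert dt\rVert_\infty = O(L^{-2})$, so \emph{that} product is bounded. But the $V_1$-nullhomotopy $F$ you have actually built is not the algebraic one: its cochain data $\hat a$ are integer roundings of the algebraic $\bar a$. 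The heart of the proof is to set $\Delta\alpha=\hat\alpha-\bar\alpha$ and $\Delta\omega=\hat\omega-\bar\omega$, observe that these are uniformly bounded precisely because of the rounding, expand the obstruction polynomial $P(\hat\omega_i)=P(\bar\omega_i+\Delta\omega_i)$, and apply Stokes to obtain a uniformly bounded $n$-form $\beta$ with $\int_{\partial c}\beta=\int_c P(\hat\omega_i)$ on every $(n{+}1)$-cell --- the bounded pieces coming from $\Delta\alpha,\Delta\omega$ together with the single $\eta\,dt$ term. Your sketch never produces this decomposition, and ``one application of a bounded homotopy operator'' is not what is happening. A further step you omit is converting the real cochain $\int\beta$ into an integral cochain with the same coboundary (Lemma~\ref{Zapprox2}), which is what actually lets you modify the lift cell by cell.
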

The class of target spaces covered by this theorem includes, most notably, all
simply connected homogeneous spaces, including spheres.  As a corollary, when the
domain is a suspension, this allows us to find short homotopies, not just
nullhomotopies:
\begin{cor}
  In the setting of Theorem \ref{intro:main}, if in addition $X$ has the
  homotopy type of a suspension, there is a constant $C^\prime(X,Y)$ such that
  any two homotopic $L$-Lipschitz maps $f,g:X \to Y$ have a homotopy of
  thickness $C^\prime(L+1)$ and width $C^\prime(L+1)^2$.
\end{cor}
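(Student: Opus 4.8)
The plan is to reduce the corollary to an application of Theorem~\ref{intro:main} to a ``difference map'' $f-g$, formed using the co-$H$-space structure carried by a suspension, and then to convert the resulting nullhomotopy into a homotopy from $f$ to $g$. First I would fix a finite complex $W$ with $X\simeq\Sigma W$ and equip $\Sigma W$ with a fixed metric for which the pinch map $\Sigma W\to\Sigma W\vee\Sigma W$ and the suspension-coordinate reversal $\Sigma W\to\Sigma W$ are $O(1)$-Lipschitz. Since $X$ is a fixed finite complex, there exist a homotopy equivalence $h\colon\Sigma W\to X$, a homotopy inverse $h'$, and homotopies $hh'\simeq\mathrm{id}$ and $h'h\simeq\mathrm{id}$, all Lipschitz with constants depending only on $X$. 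Replacing $f,g$ by $fh,gh$ multiplies their Lipschitz constants by $O(1)$, and a homotopy between $fh$ and $gh$ can be transported back through $h'$ and these two fixed homotopies to a homotopy between $f$ and $g$, multiplying thickness and width each by $O(1)$; so it suffices to treat maps $f,g\colon\Sigma W\to Y$.

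\emph{The difference map.} Let $\ast\in\Sigma W$ be the basepoint and let $p\colon[0,1]\to Y$ be a constant-speed minimizing geodesic from $f(\ast)$ to $g(\ast)$, so that $\lvert p'\rvert\le\diam Y=O(1)$. Subdividing the suspension coordinate of $\Sigma W$ into four consecutive arcs, I would define $\Phi\colon\Sigma W\to Y$ to run, arc by arc, through $f$, the constant family along $p$, the reverse of $g$, and the reverse of the constant family along $p$. This is a well-defined map sending $\ast$ to $f(\ast)$, and it is $O(L)$-Lipschitz: the fourfold reparametrisation of the suspension coordinate costs only a factor $4$, and the two $p$-arcs are $O(1)$-Lipschitz. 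Because $\Sigma W$ is a co-$H$-space, based homotopy classes of maps into $Y$ form a group, in which $\Phi$ represents the class of $f$ minus the class of $g$ transported to $f(\ast)$ along $p$; since $f$ and $g$ are freely homotopic and $Y$ is simply connected --- so that this transport is independent of the path used --- that class is trivial, and $\Phi$ is nullhomotopic.

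\emph{Applying Theorem~\ref{intro:main} and converting back.} Theorem~\ref{intro:main} applied to $\Phi$ yields a nullhomotopy $N$ of thickness $O(L)$ and width $O(L^2)$; as $\Phi$ is based and $Y$ is simply connected, one may further arrange (as in \cite{cob}) that $N$ is based, at an additive cost of $O(1)$. I would then run the standard procedure that builds a homotopy $f\simeq g$ from a nullhomotopy of $f-g$, carrying the path $p$ along. It presents the desired homotopy $F\colon\Sigma W\times[0,1]\to Y$ as the concatenation of a bounded number of homotopies, each of which is either a reparametrisation of the suspension coordinate, a cancellation homotopy $\alpha\cdot\bar\alpha\simeq\mathrm{const}$ with $\alpha$ one of $f,g,p$, or $N$ itself. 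The reparametrisations and the cancellation homotopies all have thickness and width $O(\Lip\alpha)=O(L)$, while $N$ contributes thickness $O(L)$ and width $O(L^2)$; concatenating $O(1)$ of them over $[0,1]$ multiplies widths by $O(1)$ and leaves thickness unchanged. Thus $F$ has thickness $O(L)$ and width $O(L^2)$, and transporting it back through the first step gives the homotopy the corollary asserts.

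\emph{Main obstacle.} Given Theorem~\ref{intro:main} the corollary is soft, and the real work lies in the constants: one must verify that \emph{every} homotopy entering the conversion except $N$ has width $O(L)$, so that the quadratic width of $N$ is not compounded, and that the fourfold reparametrisation leaves $\Phi$ Lipschitz with constant $O(L)$ rather than larger. I expect this bookkeeping --- together with arranging that the co-$H$-structure maps are genuinely Lipschitz on a suitably metrised $\Sigma W$ and that the based nullhomotopy of the preceding paragraph really is available with controlled cost --- to be the only points requiring care.
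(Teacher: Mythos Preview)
Your argument is correct and is precisely the standard reduction the authors have in mind: the paper states the corollary without proof, and the co-$H$-space structure of a suspension is the natural way to reduce a homotopy $f\simeq g$ to a nullhomotopy of the difference map, to which Theorem~\ref{intro:main} applies. Your careful tracking of the Lipschitz constants through the pinch map, the basing of the nullhomotopy using simple connectivity of $Y$, and the conversion back via concatenation of $O(1)$-many $O(L)$-width pieces together with the single $O(L^2)$-width nullhomotopy is exactly what is needed.
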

In particular, this gives a result for maps between spheres.  In
\cite[\S2]{GrOrang}, Gromov asks the following related question: given an
$L$-Lipschitz nullhomotopic map $f:S^m \to S^n$, how can the Lipschitz constant
of a nullhomotopy be bounded as a function of $m$, $n$ and $L$?  In this paper
we get a bound of the form $C(m,n)L^2$.  Getting an explicit estimate for
$C(m,n)$ is a topic for future work which is likely to require some geometric
understanding of homotopy groups of spheres.

As further confirmation that rational homotopy type plays a role, we prove the
following theorem, which holds for all simply connected targets:
\begin{thm} \label{intro:Qinv}
  Suppose $X$, $Y$, and $Z$ are finite complexes, with $X$ $n$-dimensional, and
  suppose that $Y$ and $Z$ are simply connected and rationally homotopy
  equivalent.  Then nullhomotopic maps $X \to Y$ and $X \to Z$ admit
  nullhomotopies \emph{of the same shapes}.
\end{thm}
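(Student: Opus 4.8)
The plan is to transport the problem of constructing short nullhomotopies back and forth between $Y$ and $Z$ using honest maps that realize the rational equivalence, in such a way that thickness, width, and Lipschitz constant all change by at most bounded multiplicative constants; then whatever ``shape'' of nullhomotopy is available for one target is automatically available for the other, in both directions.

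\emph{Making the rational equivalence geometric.} By the basic theory of rational homotopy (or an elementary obstruction argument using that $Y$ and $Z$ have finitely generated homotopy groups), a rational homotopy equivalence $Y \simeq_{\mathbb Q} Z$ can be realized, up to self-maps of bounded degree, by genuine maps: there exist an integer $N\geq 1$ and maps $\phi\colon Y\to Z$, $\psi\colon Z\to Y$ together with homotopies $\psi\circ\phi\simeq\mu_Y$ and $\phi\circ\psi\simeq\mu_Z$, where $\mu_Y\colon Y\to Y$ and $\mu_Z\colon Z\to Z$ induce multiplication by a power of $N$ on every rational homotopy group. Since maps and homotopies out of the $n$-complex $X$ — and nullhomotopies, i.e.\ maps from the cone $CX$, which has dimension $\leq n+1$ — only see the $(n{+}1)$-skeleta, it suffices to have all of this data on skeleta, which avoids any issue with infinite complexes; and once metrics are fixed, $\phi$, $\psi$, $\mu_Y$, $\mu_Z$ and the homotopies witnessing the two relations are all Lipschitz with a single constant $C_0=C_0(X,Y,Z)$. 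I would also record at this point that the homotopy fiber of $\mu_Z$ (and of $\mu_Y$) has \emph{finite} homotopy groups in every degree: from the long exact sequence of the fibration, $\pi_k$ of the fiber is an extension of $\ker(\mu_{Z*}$ on $\pi_kZ)$ by $\mathrm{coker}(\mu_{Z*}$ on $\pi_{k+1}Z)$, both of which are finite because $\mu_{Z*}$ is a rational isomorphism.

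\emph{The transport step.} Let $f\colon X\to Z$ be nullhomotopic and $L$-Lipschitz. Then $\psi\circ f\colon X\to Y$ is nullhomotopic and $C_0L$-Lipschitz, so by the hypothesis available for $Y$ it has a nullhomotopy $H$ whose thickness and width realize the shape available for $Y$ (evaluated at $C_0L$). Composing, $\phi\circ H$ is a nullhomotopy of $\phi\circ\psi\circ f$; concatenating it with the $C_0$-controlled homotopy $\phi\circ\psi\circ f\simeq\mu_Z\circ f$ yields a nullhomotopy of $\mu_Z\circ f$ of the same shape up to the factor $C_0$. It then remains to pass from a nullhomotopy of $\mu_Z\circ f$ to one of $f$ itself of the same shape; running the same argument with the roles of $Y$ and $Z$ (and of $\mu_Y$) interchanged then shows that nullhomotopic maps into $Y$ and into $Z$ admit nullhomotopies of the same shapes.

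\emph{The main obstacle: division by $\mu_Z$.} Producing a nullhomotopy of $f$ out of a controlled nullhomotopy of $\mu_Z\circ f$ amounts to lifting a controlled extension $CX\to Z$ of $\mu_Z\circ f$ through the fibration presenting $\mu_Z$, compatibly with the tautological partial lift $f$ over $X\subset CX$. Because the fiber has finite homotopy groups, the obstructions to such a lift lie in finite cohomology groups $H^{*}(CX,X;\pi_{*}(\text{fiber}))$; hence they are annihilated after multiplying by an integer bounded in terms of $Z$ alone, and — by the principle exploited throughout this paper that scaling a map or homotopy up by a bounded factor changes its Lipschitz data by only a bounded factor — this clears the obstructions at the cost of a bounded multiplicative loss in thickness and width. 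This is exactly the place where one uses that $\mu_Z$ is a rational homotopy equivalence and not an arbitrary self-map, and it is also the delicate step: one must control the Lipschitz geometry of the successive lifts skeleton by skeleton and reconcile the controlled data being manipulated with the a priori uncontrolled nullhomotopy of $f$ whose existence is merely given. Everything else — pre- and post-composition with the fixed maps $\phi$, $\psi$ and with the bounded homotopies witnessing $\psi\phi\simeq\mu_Y$ and $\phi\psi\simeq\mu_Z$ — perturbs the shape only through universal constants, which is precisely the latitude that ``the same shapes'' permits. This is consistent with the broader picture: all of the $L$-dependence in the size of an optimal nullhomotopy is carried by the rational homotopy type, while torsion enters only into the implied constants.
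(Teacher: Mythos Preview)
Your overall architecture is plausible but diverges from the paper's, which instead passes through a third finite complex $W$ with rational equivalences $Y \to W \leftarrow Z$ and thereby reduces to the case of an \emph{inclusion} $Z \hookrightarrow Y$ (or $Y \hookrightarrow Z$) with finite relative homotopy groups.  That reduction lets the paper invoke Lemma~\ref{lem:Qlift} directly and sidesteps your ``division by $\mu_Z$'' step entirely.

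That division step, as you have written it, has a real gap.  Yes, the obstructions to lifting the controlled nullhomotopy $G$ of $\mu_Z f$ through $\mu_Z$, extending the partial lift $f$ on $X$, lie in the finite groups $H^{k+1}(CX,X;\pi_k(\text{fiber}))$; but finiteness alone does not make them vanish, and I cannot see what ``multiplying by an integer'' is supposed to act on or why it would kill a torsion obstruction.  Concretely: take $X=S^{k-1}$ and $f$ constant.  A nullhomotopy $G$ of $\mu_Z f$ is then just a class in $\pi_k(Z)$, and a lift extending $f$ exists precisely when that class lies in the image of $\mu_{Z*}$.  Nothing in your construction of $G$ (as $\phi\circ H$ concatenated with a fixed homotopy) forces this, and the existence of an uncontrolled nullhomotopy $F_0$ of $f$ only tells you that $\mu_Z F_0$ lifts, not that $G$ does.

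The paper's fix for the analogous difficulty (in the case $Z \subset Y$) would also repair your argument.  The defect lives in a \emph{finite} cokernel---of $[SX,Z] \to [SX,Y]$ in the paper, of $\mu_{Z*}$ on $[SX,Z]$ in your setup---so one fixes once and for all a representative $F_\gamma:SX \to Z$ of each coset, \emph{splices} the appropriate $F_{-\gamma}$ onto the end of $G$ at bounded Lipschitz cost to put it in the image, and only then lifts.  The quantitative control on the lift itself is the content of Lemma~\ref{lem:Qlift} (or its analogue for the fibration replacing $\mu_Z$), which you gesture at with ``control the Lipschitz geometry of the successive lifts skeleton by skeleton'' but do not supply.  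I would also flag that the existence of $\phi,\psi$ with $\psi\phi\simeq\mu_Y$ and $\phi\psi\simeq\mu_Z$ is not the form in which rational equivalence of finite complexes is standardly available; the zigzag $Y\to W\leftarrow Z$ is, and that is what the paper uses.
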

We make this more precise below, but for example, if one has certain bounds on
thickness and width for nullhomotopies of maps to $Y$, then the same asymptotic
bounds hold for maps to $Z$.  It would be surprising if this didn't hold for all
homotopies rather than just nullhomotopies, but our proof does not generalize.

This paper is in large part a sequel to \cite{cob}.  While we explicitly restate
all the definitions and results we are using from that paper, the reader who has
absorbed its main techniques will have an easier time with the more complicated
cases covered here.

\subsection{The role of rational homotopy}
The main geometric tool that we use to construct quantitative nullhomotopies can
be seen in a simple example covered by the results in \cite{cob}: maps $S^n \to
S^n$.  To nullhomotope such a map, it is enough to cancel point preimages with
opposite local degree; this is an idea that goes back to Brouwer.  Tracing these
point preimages through the nullhomotopy gives an embedded 1-manifold in
$S^n \times [0,1]$.  In order to make the nullhomotopy quantitative, we break up
$S^n \times [0,1]$ into a grid and make sure that each cube in the grid doesn't
``see'' too much of this 1-manifold.

Poincar\'e duality turns this story about 0- and 1-submanifolds into one about
bounded $n$- and $(n-1)$-dimensional obstruction cochains which generalizes to a
result for maps $X \to K(\mathbb{Z},n)$ for any finite simplicial complex $X$.
In order to generalize this to a larger class of target spaces, one may try to
iterate this process over the stages of a Postnikov system, using the
obstruction theory for principal fibrations.

As pointed out by Gromov, tracing constants through the way such a lift is built
traditionally gives a Lipschitz constant which is a tower of exponentials.  Thus
to get a reasonable quantitative estimate, we need to once again do the lifting
in a local way.  Unfortunately, there is no guarantee that the nullhomotopy we
came up with in the previous stage is anywhere close to something that lifts.
For example, suppose that we are trying to nullhomotope an $L$-Lipschitz map
$f:S^3 \to S^2$ and we have come up with a nullhomotopy $F:S^3 \times [0,1] \to
\mathbb{C}\mathbf{P}^2$ which is cellular on a subdivision of a cell structure
$S^3 \times [0,1]$ at scale $1/L$.  We would like to retract this $F$ to a
nullhomotopy $S^3 \times [0,1] \to S^2$.  But a priori, the map $F|_{t=1/2}:S^3
\to S^2$ need not be nullhomotopic inside of $S^2$; by assumption, it is only
nullhomotopic as a map to $\mathbb{C}\mathbf{P}^2$.  Indeed, unless our
construction of $F$ was particularly clever, it may have Hopf invariant on the
order of $L^4$.  This means it cannot be made nullhomotopic even after a
homotopy in $\mathbb{C}\mathbf{P}^2$ if that homotopy is to be kept uniformly
bounded.

Our (partial) solution to this problem is to turn to algebra.  Let $Y$ be a
compact metric simplicial complex.  Its algebra of PL forms $A^*(Y)$ has a
Sullivan minimal model: a differential graded algebra (DGA) $\mathcal{M}^*(Y)$
which efficiently encodes the rational homotopy theory of the space and which is
realized by a map $R_Y:\mathcal{M}^*(Y) \to A^*(Y)$ inducing an isomorphism on
cohomology.  In particular, given a nullhomotopic map $f:X \to Y$, the map
$f^* \circ R_Y:\mathcal{M}^*(Y) \to A^*(X)$ is algebraically nullhomotopic, that
is, there is a homomorphism of DGAs
$$h:\mathcal{M}^*(Y) \to A^*(X) \otimes \mathbb{Q}\langle t,dt \rangle$$
with $h|_{t=0,dt=0}=f^* \circ R_Y$ and $h|_{t=1,dt=0}=0$.  In particular, this
homomorphism must commute with the differential, given on the codomain by a
graded Leibniz rule.

To construct such a homomorphism, one needs to antidifferentiate certain forms.
For example, if $X=S^3$ and $Y=S^2$, then $\mathcal{M}^*(Y)$ is given by
$\langle x_2,y_3 \mid dx=0,dy=x^2 \rangle$ and $R_Y$ takes $x$ to a volume form
$\omega$ on $S^2$ and $y$ to 0.  Then given a nullhomotopic map $f:S^3 \to S^2$
which is simplicial on a triangulation of $S^3$ at scale $1/L$, a nullhomotopy
$\tilde h:\mathcal{M}^*(S^2) \to A^*(S^3) \otimes \mathbb{Q}\langle t,dt \rangle$
can be given by
\begin{align*}
  x &\mapsto f^*\omega \otimes (1-t)^2-\alpha \otimes 2(1-t)dt \\
  y &\mapsto \eta \otimes 4(1-t)^3dt,
\end{align*}
where $\alpha$ and $\eta$ are defined so as to satisfy $d\alpha=f^*\omega$ and
$d\eta=f^*\omega \wedge \alpha$.  Here, $f^*\omega \wedge \alpha$ is an exact form
since the Hopf invariant of $f$ is zero, using J.H.C.~Whitehead's definition of
the Hopf invariant via integrals.  A filling inequality allows us to choose an
$\alpha$ of $\infty$-norm $O(L)$ and---since $\alpha$ and $f^*\omega$ each
contribute a factor of $L$---an $\eta$ of $\infty$-norm $O(L^2)$.

(Note that the various polynomials in $t$ can be replaced by other polynomials,
or even, once we leave purely algebraic territory, by any functions of $t$ which
satisfy the differential equations induced by the requirements on $\tilde h$.
The choice of these functions affects our final estimates only up to a constant.)

By ``evaluating'' $t$ and $dt$ we can turn this nullhomotopy into a map
$h:\mathcal{M}^*(S^2) \to A^*(S^3 \times I)$; this notation tacitly assumes a
simplicial structure on $S^3 \times I$ whose choice may depend on $L$.  If we
choose a fine enough subdivision of the interval, into $O(L^2)$ pieces, so that
simplices are very skinny in the time direction, then $dt$ is small enough that
the integrals over simplices of $h(x)$ and $h(y)$ are bounded uniformly,
independent of $L$.

Consider now the previously constructed nullhomotopy $F:S^3 \times [0,1] \to
\mathbb{C}\mathbf{P}^2$, and let $\xi$ be a differential form representing the
fundamental class of $\mathbb{C}\mathbf{P}^2$.  If $F^*\xi$ is a bounded distance
from $h(x)$, then Hopf invariants on boundaries of 4-cells can be determined by
integrating a form a bounded distance from $h(y)$.  Combined with ideas from
\cite{FWPNAS} and \cite{cob}, this allows us to kill these Hopf invariants by
modifying the map in a bounded way.

Unfortunately, if we try to continue this process to a third level and beyond,
the ``errors'' are no longer uniformly bounded.  This is related to the
well-known fact that $(L+1)^2 \neq L^2+1$.  This is why what seems to be the
second step of an induction does not actually generalize to a proof
Conjecture \ref{conjNull} for any $q \geq 3$.  At this time we have to be
content with Theorem \ref{intro:main}.

Another issue with potentially extending this method is that the property of
$\tilde h$ that one can cancel out large antiderivatives by making $dt$ small is
special: one can only construct such a nullhomotopy when $Y$ \emph{has positive
weights}, that is, essentially when $\mathcal{M}^*(Y)$ has lots of
automorphisms.  This property is discussed in \cite{BMSS} and examples of spaces
which do not have it are given in \cite{MT} and \cite{Am}.  For more general
spaces, such nullhomotopies may necessarily have large terms which are not
multiples of $dt$.  Thus, if one is to find a counterexample to Conjecture
\ref{conjNull} in which a nullhomotopy must necessarily have nonlinear
thickness, spaces which do not admit positive weights seem to be a natural place
to look.

\subsection{Optimality}
One may ask to what extent our results are sharp.  We produce two main examples
to this effect.  First, we give a sequence of examples, also mentioned in
\cite{cob}, which demonstrate that a linear bound does not always hold, and in
some cases the quadratic bound on width is the best we can do.  More generally,
this family of examples demonstrates that, at least in some cases, the
conjectured upper bound of Conjecture \ref{conjNull} is also a lower bound.
Secondly, we construct an example which shows that the statement of Theorem
\ref{intro:main} does not hold if we replace nullhomotopies by homotopies: the
exponents in Conjectures \ref{conjAll} and \ref{conjNull} are necessarily
different.

Nevertheless, many open questions remain even in the restricted domain of
Theorem \ref*{intro:main}.  There is some indication that for maps $S^3 \to S^2$
our quadratic bound on widths of nullhomotopies is not sharp, but rather is an
artifact of the algebraic method: it is possible to construct nullhomotopies with
subquadratic, perhaps even linear Lipschitz constant.

\subsection{Outline of the paper}
In section 2, we introduce and summarize some geometric results and terminology.
This is followed in the third section by a proof of Theorem \ref{intro:main} in
the special case of maps $S^3 \to S^2$.  Section 4 repeats this for a more
general, but still restricted situation.  In section 5 we prove that the
asymptotic geometry of nullhomotopies is rationally invariant, and section 6
uses this as well as the result of section 4 to prove the main theorem.
Finally, in section 7 we discuss lower bounds on the size of homotopies.

\subsection{Acknowledgments}
The authors would like to thank Dominic Dotterrer for many useful conversations
and suggestions over the course of the development of this project, and Mike
Freedman for insightful questions that we believe helped improve the exposition.
We are grateful to the anonymous referee for many corrections and suggestions for
clarifying the exposition.  The second author would like to thank Alex Nabutovsky
for pointing out certain prior work and related problems.

The first author was partially supported by an NSERC postdoctoral fellowship.


\section{Preliminaries}

In this section, we summarize the geometric machinery developed in \cite{cob}
as well as introducing some of our own.

\subsection{Simplicial approximation and mosaic maps}
One result we will make heavy use of is a quantitative simplicial approximation
theorem, allowing us to approximate any map between simplicial complexes by a
simplicial one with a similar Lipschitz constant.  First, we need to define the
appropriate kind of subdivision.
\begin{defn}
  Define a \emph{simplicial subdivision scheme} to be a family, for every pair of
  natural numbers $n$ and $L$, of metric simplicial complexes $\Delta^n(L)$
  isometric to the standard $\Delta^n$ with length 1 edges, such that
  $\Delta^n(L)$ restricts to $\Delta^{n-1}(L)$ on all faces.  A subdivision scheme
  is \emph{regular} if for each $n$ there is a constant $A_n$ such that
  $\Delta^n(L)$ has at most $A_n$ isometry classes of simplices and a constant
  $r_n$ such that all 1-simplices of $\Delta^n(L)$ have length in
  $[r_n^{-1}L^{-1},r_nL^{-1}]$.

  Given a regular subdivision scheme, we can define the
  \emph{$L$-regular subdivision} of any metric simplicial complex, where each
  simplex is replaced by an appropriately scaled copy of $\Delta^n(L)$.
\end{defn}
Note that $L$ times barycentric subdivision is \emph{not} regular.  Two known
examples of regular subdivision schemes are the edgewise subdivision described
in \cite{EdGr} and the cubical subdivision described in \cite{FWPNAS}.
\begin{prop}[Quantitative simplicial approximation theorem]
  \label{prop:quantitative_simplicial_approximation}
  For finite simplicial complexes $X$ and $Y$ with piecewise linear metrics,
  there are constants $C$ and $C^\prime$ such that any $L$-Lipschitz map
  $f:X \to Y$ has a $CL$-Lipschitz simplicial approximation via a homotopy of
  thickness $CL+C^\prime$ and width $C^\prime$.
\end{prop}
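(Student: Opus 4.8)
The plan is to run the classical simplicial approximation argument, but to track Lipschitz constants carefully and to replace ordinary barycentric subdivision of the domain by an $L$-regular subdivision (in the sense just defined), so that the subdivision refines $X$ enough for simplicial approximation without distorting the metric by more than a bounded factor. First I would fix a regular subdivision scheme once and for all and replace $X$ by its $L$-regular subdivision $X_L$; by regularity, $X_L$ has boundedly many isometry types of simplices and all edges have length $\Theta(1/L)$, so the identity map $X \to X_L$ is bi-Lipschitz with constant depending only on the scheme and $X$. I would leave $Y$ with its given triangulation. The point of subdividing at scale $1/L$ is that for an $L$-Lipschitz map $f$, the image of each simplex of $X_L$ has diameter $O(1)$, and in fact, after a further bounded number of applications of the subdivision scheme (a number depending only on $Y$, via the Lebesgue number of the open star cover of $Y$), the image of the closed star of every vertex of $X_L$ lies inside some open star of a vertex of $Y$. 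This is exactly the hypothesis needed for the simplicial approximation theorem.

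With that in place, one chooses for each vertex $v$ of $X_L$ a vertex $\ph(v)$ of $Y$ whose open star contains $f(\st v)$, and extends $\ph$ linearly over simplices; the classical argument shows $\ph$ is a well-defined simplicial map homotopic to $f$. To estimate $\Lip \ph$: an edge of $X_L$ has length $\Theta(1/L)$ and maps to an edge (or vertex) of $Y$ of length $O(1)$, so on each simplex $\ph$ stretches by $O(L)$; since $X_L$ has boundedly many simplex types and is a simplicial complex with a PL metric, the global Lipschitz constant is $O(L)$, giving the claimed $CL$. For the homotopy, I would use the straight-line homotopy $H(x,s)$ obtained by going along the unique geodesic in $Y$ from $f(x)$ to $\ph(x)$ — this is well-defined because both points lie in a common (open) simplex of $Y$, hence in a convex subset, for every $x$. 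Thickness of $H$: at fixed $s$, $H(\cdot,s)$ interpolates between $f$ (Lipschitz $L$) and $\ph$ (Lipschitz $O(L)$) within single simplices of $Y$, and since the geodesics vary $O(L)$-Lipschitz-ly in the basepoint (again using that nearby points of $X_L$ land in a common simplex of $Y$), one gets thickness $O(L)+O(1)$, i.e.\ $CL+C'$. Width of $H$: moving $s$ from $0$ to $1$ traverses a geodesic in $Y$ of length at most the diameter of a simplex of $Y$, which is $O(1)$ independent of $L$, so the width is bounded by a constant $C'$.

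The main obstacle, and the step requiring the most care, is the passage from ``$f$ is $L$-Lipschitz on $X$'' to ``$f(\st v) \subseteq \st w$ for some vertex $w$ of $Y$, for every vertex $v$ of the subdivided domain.'' One must choose the number of iterations of the subdivision scheme correctly: it is governed by the Lebesgue number $\lambda$ of the covering of $Y$ by open stars of vertices, so that once the closed stars in $X_L$ have $f$-image of diameter $<\lambda$ one is done — and the diameter of a closed star of a vertex in $X_L$ is $O(1/L) \cdot (\text{mesh constants})$, so $f$-images of closed stars have diameter $O(1)$, which is \emph{not} automatically less than $\lambda$. This is why the extra bounded number of subdivisions (depending on $Y$ and on $X$, but crucially not on $L$) is needed: each subdivision shrinks stars by a factor controlled by the scheme, so finitely many suffice to get below $\lambda$, and this finitely many is absorbed into the constants $C,C'$. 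Everything else — the Lipschitz estimates for $\ph$, the convexity arguments placing $f(x)$ and $\ph(x)$ in a common simplex, and the thickness/width bounds for the straight-line homotopy — is routine once this covering condition is arranged, using only regularity of the subdivision scheme and compactness (finiteness) of $X$ and $Y$.
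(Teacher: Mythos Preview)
The paper does not supply a proof of this proposition; it is imported from \cite{cob} as part of the preliminary machinery, so there is no argument here to compare against.  Your outline is the standard quantitative simplicial approximation argument and is correct in all essentials: subdivide $X$ regularly at scale $\sim 1/L$, refine by a bounded further amount governed by the Lebesgue number of the open-star cover of $Y$, define $\ph$ vertex by vertex, and homotope along straight lines in the carrier simplices of $Y$.  This is presumably close to what appears in \cite{cob}.

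Two minor points of phrasing are worth tightening.  A subdivision scheme in the sense defined here is indexed by a scale parameter rather than by iteration, so ``a further bounded number of applications of the subdivision scheme'' is better expressed as ``take the $\lceil cL\rceil$-regular subdivision for a suitable constant $c=c(X,Y)$''.  And the global thickness bound is cleanest if you observe that the barycentric-coordinate embedding $Y\hookrightarrow\mathbb{R}^{V(Y)}$ is bi-Lipschitz with constant depending only on $Y$, so that the linear interpolation $(1-s)f+s\ph$ in $\mathbb{R}^{V(Y)}$ already has Lipschitz constant at most $(1-s)\Lip f+s\Lip\ph=O(L)$; this sidesteps any worry about how ``the unique geodesic'' behaves across simplex boundaries in a PL metric.
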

As in \cite{cob}, we will use simplicial approximation mainly as a way of
ensuring that our maps have a uniformly finite number of possible restrictions
to simplices.  The property that we really care about, then, is the following:
\begin{defn}
  Let $\mathcal{F}_k$ be a finite set of maps $\Delta^k \to Y$, for some space
  $Y$.  If $X$ is a simplicial complex, a map $f:X \to Y$ is
  \emph{$\mathcal{F}$-mosaic} if all of its restrictions to $k$-simplices are
  in $\mathcal{F}_k$.  More generally, we can take $X$ to be any polyhedral
  complex with a finite collection of cell shapes (e.g.\ a cubical complex, or a
  product of simplicial complexes) and $\mathcal{F}_k$ to be a set of maps from
  each of the various shapes.

  We refer to a collection of maps as \emph{uniformly mosaic} if they are all
  $\mathcal{F}$-mosaic with respect to a fixed unspecified $\mathcal{F}$.
\end{defn}
The $\mathcal{F}_k$ in the definition naturally form a semi-simplicial set
$\mathcal{F}$ via restriction maps.  Thus we can think of an
$\mathcal{F}$-mosaic map $f$ equivalently as one that factors through
$$X \xrightarrow{g} \mathcal{F} \xrightarrow{h_{\mathcal{F}}} Y,$$
where $h_{\mathcal{F}}$ is fixed and $g$ is simplicial, or more generally takes
cells isomorphically to cells.  In particular, the property of a collection of
maps being uniformly mosaic is preserved by postcomposition with any map, for
example one collapsing certain simplices.


\subsection{Isoperimetry for cochains}
In rational homotopy theory, algebraic nullhomotopies are constructed by
antidifferentiating certain exact differential forms.  To imitate this
construction geometrically, we need to be able to antidifferentiate simplicial
cochains in a quantitative way.  This is given to us by the following lemma,
proven in \cite{cob}.  Here, the $\ell^\infty$ norm of a cochain is simply the
maximum of its values on simplices.
\begin{lem}[$\ell^\infty$ coisoperimetry] \label{lem0'}
  Let $X$ be a finite simplicial complex equipped with the standard metric, and
  let $X_L$ be the cubical or edgewise $L$-regular subdivision of $X$, and $k
  \geq 1$.  Then there is a constant $C_{\mathrm{IP}}=C_{\mathrm{IP}}(X,k)$ such that
  for any simplicial coboundary $w \in C^k(X_L;\mathbb{Z})$, there is an
  $a \in C^{k-1}(X_L;\mathbb{Z})$ with $\delta a=w$ such that
  $\lVert a \rVert_\infty \leq C_{\mathrm{IP}}L\lVert w \rVert_\infty$.
\end{lem}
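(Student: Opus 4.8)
The plan is to reduce the statement to the classical fact that the $L$-regular subdivision $X_L$ of a fixed finite complex $X$ admits an efficient simplicial contraction of each star, and then to use a standard "local antiderivative'' argument familiar from the proof of the Poincar\'e lemma, but carried out at the cochain level and tracked quantitatively. First I would fix, once and for all, a triangulation of $X$ and note that because the subdivision scheme is regular, $X_L$ has a bounded number of isometry classes of simplices and all edge lengths are comparable to $L^{-1}$; in particular combinatorial distance and metric distance on $X_L$ differ by a factor of $O(L)$, and each simplex of $X$ contains $O(L^n)$ simplices of $X_L$ arranged in a pattern independent of $L$ up to rescaling. This is the only place the hypothesis of regularity enters.

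Next I would set up a chain-level contracting homotopy on each top-dimensional simplex $\sigma$ of $X$. On the scaled copy of $\Delta^n(L)$ sitting inside $\sigma$ one has an explicit simplicial deformation retraction onto a vertex, and hence a chain homotopy $P_\sigma:C_*(\sigma_L)\to C_{*+1}(\sigma_L)$ with $\partial P_\sigma + P_\sigma\partial = \mathrm{id} - (\text{projection to the vertex})$. Because $\Delta^n(L)$ has only $A_n$ isometry classes of simplices, this $P_\sigma$ can be taken to have \emph{uniformly bounded complexity}: the coefficient of each simplex $\tau'$ in $P_\sigma(\tau)$ is bounded by a constant $B_n$ independent of $L$, and $P_\sigma(\tau)$ is supported on simplices at bounded combinatorial distance from $\tau$. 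Dualizing, $P_\sigma$ gives a map $P_\sigma^*:C^{k}(\sigma_L;\mathbb{Z})\to C^{k-1}(\sigma_L;\mathbb{Z})$ with $\|P_\sigma^* w\|_\infty \le B_n' \|w\|_\infty$ for a constant $B_n'$ depending only on $n$ and the scheme, again because each $(k-1)$-simplex meets only boundedly many $k$-simplices in the support.

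The core of the argument is then an induction over the skeleta of $X$ (the \emph{coarse} complex, not $X_L$), patching these local antiderivatives together. Given a coboundary $w=\delta v$ on $X_L$ with $\|w\|_\infty$ normalized to $1$, I would first antidifferentiate $w$ over the $0$-skeleton (trivial), then extend: having found $a$ on the $(j-1)$-skeleton of $X$ with $\delta a = w$ there and $\|a\|_\infty \le C_j$, for each $j$-simplex $\sigma$ the cochain $w - \delta a$ restricted to $\sigma_L$ is a coboundary (since $w$ is globally a coboundary and $\delta a$ already matches it on $\partial\sigma_L$, one checks the obstruction in $H^k(\sigma_L,\partial\sigma_L)$ vanishes), so applying $P_\sigma^*$ — suitably modified near $\partial\sigma_L$ so as not to disturb the already-chosen values there — produces a correction of $\ell^\infty$ norm $O(\|w-\delta a\|_\infty)=O(C_j)$, and we update $a$ accordingly. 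After $n$ steps the constant has grown by at most a bounded multiplicative factor at each stage, giving $\|a\|_\infty \le C_{\mathrm{IP}}$ with $C_{\mathrm{IP}}$ depending only on $X$ and $k$. Restoring the normalization gives $\|a\|_\infty \le C_{\mathrm{IP}}\|w\|_\infty$.

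I do not actually expect a factor of $L$ to appear in this argument — which is a good sign, since the claimed bound $\|a\|_\infty \le C_{\mathrm{IP}} L \|w\|_\infty$ is \emph{weaker} than what this local patching yields. The single factor of $L$ in the statement is presumably bookkeeping slack (or needed in a formulation handling the case where $w$ is a coboundary only after passing to $X_L$, so that a global rather than skeleton-by-skeleton filling must be produced); in any case the bound we get is at least as strong. The main obstacle will be the patching step: ensuring that the local antiderivative produced by $P_\sigma^*$ on $\sigma_L$ can be corrected, at bounded $\ell^\infty$ cost, to agree with the previously chosen values on $\partial\sigma_L$. This is a relative coisoperimetry statement for the pair $(\sigma_L,\partial\sigma_L)$, and here one uses that $\partial\sigma_L$ is a regularly subdivided $(j-1)$-sphere inside $\sigma_L$ with a bounded collar, so that a cochain supported near the boundary with prescribed coboundary can be built with uniformly bounded norm by a second application of the same contraction machinery to the collar.
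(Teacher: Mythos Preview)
The paper does not prove this lemma; it is quoted as a result from \cite{cob}.  So there is no ``paper's proof'' to compare against here, and the question is simply whether your sketch is correct.

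It is not, and the error is exactly at the point where you claim a stronger bound than the one stated.  You assert that the dual chain homotopy satisfies $\lVert P_\sigma^* w\rVert_\infty \le B_n'\lVert w\rVert_\infty$ with $B_n'$ independent of $L$, ``because each $(k-1)$-simplex meets only boundedly many $k$-simplices in the support.''  That last clause is false: $(P_\sigma^* w)(\tau)=w(P_\sigma\tau)$, and $P_\sigma\tau$ is the \emph{cone} on $\tau$ to the chosen vertex, which is a $k$-chain supported on $\Theta(L)$ simplices of $\sigma_L$, not boundedly many.  Concretely, take $X=[0,1]$, $k=1$, $X_L$ the subdivision into $L$ equal edges, and $w\equiv 1$ on all edges.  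Then $\lVert w\rVert_\infty=1$, but any $a\in C^0(X_L;\mathbb Z)$ with $\delta a=w$ satisfies $a(v_i)=a(v_0)+i$, so $\lVert a\rVert_\infty\ge L/2$.  The factor of $L$ in the lemma is sharp, not bookkeeping slack.

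Your overall architecture---a cone-type antiderivative on each coarse simplex, patched by induction over the skeleta of $X$---is a reasonable route to the \emph{stated} bound, once you accept that $P_\sigma^*$ costs a factor of $L$.  But then the patching step requires real care: na\"ively, at stage $j$ you apply $P_\sigma^*$ to $w-\delta a$, whose $\ell^\infty$ norm is already $O(L\lVert w\rVert_\infty)$ from the previous stage, and you would pick up another factor of $L$, ending with $O(L^{\dim X})$.  To avoid this you must arrange the relative step so that the correction on $\sigma_L$ depends only on $w|_{\sigma_L}$ (of norm $\lVert w\rVert_\infty$) and on the boundary discrepancy, which has to be extended across $\sigma_L$ \emph{as a cocycle} at bounded cost rather than re-antidifferentiated.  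That is where the actual work lies, and your sketch does not yet address it.
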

The proof of this fact uses the following lemma which we will also need
independently.
\begin{lem} \label{Zapprox2}
  With the same assumptions, there is a constant $K(X,k)$ such that for any real
  simplicial cocycle $w \in C^{k-1}(X_L,\mathbb{R})$, there is an integral cocycle
  $\tilde w \in C^{k-1}(X_L;\mathbb{Z})$ with $\lVert w-\tilde w \rVert_\infty \leq
  K$.
\end{lem}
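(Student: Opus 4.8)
The plan is to reduce to a statement about a single simplex shape and then use compactness. First, observe that since we are working with a regular subdivision scheme, each simplex of $X_L$ is an isometrically scaled copy of one of finitely many model simplices $\Delta^j(L)$, and since the scheme is regular, there are only $A_j$ isometry classes of such simplices. So it suffices to produce the integral approximation on each model piece with a uniform constant, and then check that these local approximations can be glued into a global cocycle. Concretely, a real cocycle $w \in C^{k-1}(X_L;\mathbb{R})$ restricts to a cocycle on each simplex; the obstruction to replacing $w$ by a nearby integral cocycle is cohomological, and since $X_L$ is (simplicially homotopy equivalent to) $X$, which is a fixed finite complex, the relevant cohomology groups $H^{k-1}(X;\mathbb{R})$ and $H^{k-1}(X;\mathbb{Z})$ are fixed and finite-dimensional.

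The key steps are as follows. (1) Decompose $w = w_{\mathrm{cl}} + \delta b$ where $w_{\mathrm{cl}}$ represents the cohomology class and $b \in C^{k-2}(X_L;\mathbb{R})$; by the $\ell^\infty$ coisoperimetric inequality (Lemma~\ref{lem0'}) applied to the real coefficients, one can take $\|b\|_\infty \le C_{\mathrm{IP}} L \|\delta b\|_\infty \le C_{\mathrm{IP}} L (\|w\|_\infty + \|w_{\mathrm{cl}}\|_\infty)$—but this has the wrong $L$-dependence, so instead I proceed more carefully. (2) The right approach: work cell by cell. On each top-dimensional simplex $\sigma$ of $X$ (before subdivision), the subdivided complex $\sigma_L$ is a copy of $\Delta^n(L)$; since $\Delta^n$ is contractible, $w|_{\sigma_L}$ is a coboundary, $w|_{\sigma_L} = \delta a_\sigma$, and one rounds the values of $a_\sigma$ on $(k-2)$-simplices to the nearest integers to get $\tilde a_\sigma$ with $\|a_\sigma - \tilde a_\sigma\|_\infty \le 1/2$; then $\delta \tilde a_\sigma$ is integral and $\|w|_{\sigma_L} - \delta \tilde a_\sigma\|_\infty \le (k+1)/2$, a constant. (3) Patch the $\delta \tilde a_\sigma$ together: the discrepancy between the patches on a shared face is an integral coboundary of small norm on that face, which can be corrected by a further bounded modification using that faces are also regular subdivisions of fixed simplices and a downward induction on the skeleton.

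The main obstacle is step (3), the gluing: rounding independently on each maximal simplex produces integral cochains whose coboundaries agree on overlaps only up to a bounded integral error, and one must absorb that error without reintroducing $L$-dependence. The trick is to perform the rounding skeleton-by-skeleton, starting from low-dimensional simplices of $X$ and extending: when passing to a simplex $\sigma$ whose boundary faces already carry a chosen integral approximation, one first extends that boundary data to $\sigma_L$ (possible because $H^{k-1}(\sigma_L, \partial\sigma_L; \mathbb{Z})$ vanishes in the relevant degree, or is a fixed finite group, so extension costs only a bounded correction depending on $n$ and $k$), then rounds only the interior values. Because each model simplex $\Delta^n(L)$ has uniformly bounded combinatorial "diameter" relative to the correction being propagated—more precisely, because the cohomological extension problem on $(\Delta^n(L), \partial)$ has a solution of norm bounded independent of $L$ by the coisoperimetric inequality applied one dimension down, where the input is already of constant norm—the total error stays bounded by a constant $K$ depending only on $X$ and $k$. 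I would also note that this lemma is standard in spirit and a version appears in the proof of Lemma~\ref{lem0'} itself in \cite{cob}, so one can alternatively simply cite that construction and extract the intermediate statement.
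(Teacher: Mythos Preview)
The paper does not actually prove this lemma; it is stated as a fact from \cite{cob}, so there is no ``paper's own proof'' to compare against.  Your closing remark that one may simply cite \cite{cob} is therefore exactly what the paper does.

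As for your direct argument, you abandon approach~(1) for the wrong reason and then pursue approach~(2)+(3), which has a real gap.  In step~(1) you worry that coisoperimetry only gives $\lVert b\rVert_\infty \leq C_{\mathrm{IP}}L\lVert\delta b\rVert_\infty$, but the size of $b$ is irrelevant: rounding $b$ to the nearest integer cochain $\tilde b$ always yields $\lVert b-\tilde b\rVert_\infty\le 1/2$, hence $\lVert \delta b-\delta\tilde b\rVert_\infty\le k/2$, independent of $L$.  To finish, one only needs bounded integral representatives for a basis of $H^{k-1}(X_L;\mathbb{R})\cong H^{k-1}(X;\mathbb{R})$; these are obtained by pulling back fixed integral cocycles on $X$ along a simplicial approximation $X_L\to X$ to the identity (which exists since $X_L$ is a subdivision), and pullback does not increase $\ell^\infty$ norm.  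Writing $w=\sum c_i\hat z_i+\delta b$, setting $\tilde w=\sum\lfloor c_i\rfloor\hat z_i+\delta\tilde b$ gives $\lVert w-\tilde w\rVert_\infty\le k/2+\sum\lVert\hat z_i\rVert_\infty$, a constant depending only on $X$ and $k$.

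By contrast, your skeleton-by-skeleton rounding in~(3) does not close.  When you pass to a $k$-simplex $\sigma$ of $X$ with $\tilde w$ already defined on $\partial\sigma_L\cong S^{k-1}$, the obstruction to extending $\tilde w$ as a cocycle over $\sigma_L$ lies in $H^k(\sigma_L,\partial\sigma_L;\mathbb{Z})\cong\mathbb{Z}$, not $H^{k-1}$; concretely it is the class $[\tilde w|_{\partial\sigma_L}]\in H^{k-1}(S^{k-1};\mathbb{Z})$.  Since $[w|_{\partial\sigma_L}]=0$, this obstruction equals the sum of $\pm(\tilde w-w)$ over the $\sim L^{k-1}$ top simplices of $\partial\sigma_L$, which is only bounded by a constant times $L^{k-1}$, not by a constant.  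So the claim that ``extension costs only a bounded correction depending on $n$ and $k$'' is unjustified at this step, and the induction does not go through as written.
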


\subsection{Quantitative De Rham theory} \label{QDR}
In order to prove the main theorem, we need to discuss cup products on the
cochain level.  Since simplicial cup products do not have particularly nice
properties, it will be more convenient to use differential forms.  Therefore it
will be helpful to be able to associate to each simplicial cochain a
corresponding standard differential form.  We use the notation $\int\omega$ to
denote the simplicial cochain obtained by integrating a differential form; here
we construct a chain homotopy inverse to this operation.

To do this, we use Whitney's proof of the De Rham theorem, provided in
\cite[\S IV.27]{GIT}.  Whitney constructs an explicit isomorphism $D_\bullet$
from the simplicial cochain complex $C^\bullet(M;\mathbb{R})$ of a manifold $M$
to a subcomplex of $\Omega^\bullet(M)$.  The same construction produces smooth
forms on any simplicial complex as a stratified space.  For every $n$, let
$\{g^n_i:0 \leq i \leq n\}$ be a smooth partition of unity on the standard
simplex $\Delta^n=\left\{\vec x:\sum_{j=0}^n x_j=1\right\} \subset
\mathbb{R}^{n+1}$ with the following properties:
\begin{itemize}
\item $g^n_i \equiv 1$ near the $i$th vertex and 0 near the opposite face;
\item $\{g^n_i\}$ is invariant under the action of the symmetric group;
\item for every $j$, $g^n_i$ is independent of $x_j$ when $x_j<\epsi_n$, for fixed
  $\epsi_n>0$;
\item $g^n_i|\Delta^{n-1}=g^{n-1}_i$.
\end{itemize}
On any simplicial complex $X$, this defines a smooth partition of unity $\{g_v:v
\in X^{(0)}\}$.  For a given $\ell$-simplex $c=(v_0,\ldots,v_\ell)$, Whitney then
defines
$$D_\ell(\chi_c)=\ell!\sum_{i=0}^\ell (-1)^ig_{v_i}dg_{v_0} \wedge \cdots \wedge
\widehat{dg_{v_i}} \wedge \cdots \wedge dg_{v_\ell}$$
($\phi_\ell$ in his notation) and shows that this induces a map
$D_\bullet:C^\bullet(M;\mathbb{R}) \to \Omega^\bullet(M)$ which is an isomorphism of
cochain complexes onto its image.

In order to apply this isomorphism to our situation, we need to make a few more
remarks:
\begin{enumerate}
\item If $p:X \to Y$ is a simplicial map, then $p^*D_\bullet=D_\bullet p^*$.  This
  follows from the special case of an $(n+1)$-simplex collapsed onto an
  $n$-simplex, which is itself easy to see.
\item Given two simplicial complexes $X$ and $Y$, the map
  $$D_\bullet^{X \times Y} := \pi_1^*D_\bullet^X \wedge \pi_2^*D_\bullet^Y:
  C^\bullet(X;\mathbb{R}) \otimes C^\bullet(Y;\mathbb{R}) \to
  \Omega^\bullet(X \times Y)$$
  is likewise an isomorphism from the cellular cochains on the product cell
  structure on $X \times Y$ to its image.  In particular, we will use this in
  the setting $Y=[0,1]$, split into some number of 1-simplices.  We will say a
  form is \emph{desimplicial} if it is in the image of this map.
\item Given a Riemannian metric on each stratum of $X$ and a form $\omega$,
  define $\lVert\omega\rVert_\infty$ to be the maximum value of $\omega$ on a
  tuple of unit vectors.  Then there are constants $C_\ell$ such that if we put
  on $X \times Y$ the product metric of the standard metrics on simplices, then
  $$\lVert D_\ell(c) \rVert_\infty \leq C_\ell\lVert c \rVert_\infty.$$
\item Let $\omega \in \Omega^*(X \times [0,1])$ be a desimplicial form with
  $\omega|_{X \times \{1\}} \equiv 0$.  For a multivector $\xi \in T^n_{(x,t)}(X
  \times I)$, write $\xi_s$ for the corresponding multivector in $T^n_{(x,s)}(X
  \times I)$.  Then the form
  $$\alpha(\xi)=\int_t^1 \omega_i(\xi_s,ds)ds$$
  is also desimplicial.
\end{enumerate}

\section{The case of maps $S^3 \to S^2$}

In this section, as a warmup, we handle a concrete special case which touches
upon most of the problems which we will encounter in proving the more general
theorem.
\begin{thm}
  There is a constant $C$ such that any nullhomotopic $L$-Lipschitz map $f:S^3
  \to S^2$ has a nullhomotopy of width $C(L+1)^2$ and thickness $C(L+1)$.
\end{thm}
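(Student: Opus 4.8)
The plan is to follow the algebraic blueprint sketched in the introduction, making each step quantitative. First I would invoke the quantitative simplicial approximation theorem (Proposition \ref{prop:quantitative_simplicial_approximation}) to replace $f$ by an $O(L)$-Lipschitz simplicial map on the $O(L)$-regular subdivision $S^3_L$, at the cost of a homotopy of thickness $O(L)$ and width $O(1)$; so without loss of generality $f$ is simplicial and uniformly mosaic. The Sullivan minimal model of $S^2$ is $\mathcal{M}^*(S^2)=\langle x_2, y_3 \mid dx=0,\ dy=x^2\rangle$, with $R_{S^2}$ sending $x$ to a volume form $\omega$ and $y$ to $0$. Pulling back, $f^*\omega$ is a closed $2$-form on $S^3_L$ with $\ell^\infty$-norm $O(L^2)$ as a differential form, but I will want to work with its \emph{cochain} $\int f^*\omega$, which has $\ell^\infty$-norm $O(1)$ since $f$ is mosaic and each simplex of $S^3_L$ has size $O(1/L)$. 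The first key step is to antidifferentiate: using Lemma \ref{lem0'} ($\ell^\infty$ coisoperimetry), I obtain an integral cochain $a \in C^1(S^3_L;\mathbb{Z})$ with $\delta a = \int f^*\omega$ and $\|a\|_\infty = O(L)$. Then I would build the algebraic nullhomotopy $\tilde h:\mathcal{M}^*(S^2) \to A^*(S^3) \otimes \mathbb{Q}\langle t,dt\rangle$ by
\begin{align*}
  x &\mapsto D(\textstyle\int f^*\omega) \otimes (1-t)^2 - D(a) \otimes 2(1-t)\,dt, \\
  y &\mapsto \eta \otimes 4(1-t)^3\,dt,
\end{align*}
where $D$ is the Whitney desimplicialization of Section \ref{QDR} and $\eta$ is chosen with $d\eta = D(\int f^*\omega)\wedge D(a)$ — such $\eta$ exists precisely because $f$ is nullhomotopic, so its Hopf invariant vanishes, and a filling inequality lets us take $\|\eta\|_\infty = O(L^2)$ (two factors of $O(L)$: one from $D(a)$, one from the desimplicialized $f^*\omega$ whose $\ell^\infty$ norm as a form is $O(L)$ after accounting for the mosaic structure at scale $1/L$ — I would need to track this carefully).

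Next I would \emph{evaluate} $t$ and $dt$ on a cell structure for $S^3 \times [0,1]$ in which the interval is subdivided into $O(L^2)$ pieces, so that $dt$ has size $O(1/L^2)$; this produces an honest map $h:\mathcal{M}^*(S^2) \to A^*(S^3\times I)$ for which the integrals of $h(x)$ and $h(y)$ over all cells are bounded uniformly in $L$. The idea is that the $O(L)$-sized antiderivative $D(a)$ is multiplied by $dt$ of size $O(1/L^2)$, and the $O(L^2)$-sized $\eta$ is multiplied by $dt$ of size $O(1/L^2)$, so both terms become $O(1/L)$, with the required number of cells being $O(L^4 \cdot L^2)$, i.e.\ the width $O(L^2)$ paid for by the time-subdivision and the thickness $O(L)$ coming from the space-subdivision. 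Then I would use $h(x)$ to correct the naive cellular nullhomotopy into $\mathbb{C}\mathbf{P}^2$: concretely, I build $F:S^3\times I \to \mathbb{C}\mathbf{P}^2$ cellularly so that the pulled-back fundamental class cochain agrees with $\int h(x)$ up to a bounded error, which is possible because $S^3\times I$ has the integral $2$-cohomology of $\mathbb{C}\mathbf{P}^2$ (trivial on the relevant cells) and $h(x)$ is an integral-up-to-$O(1)$ cocycle by Lemma \ref{Zapprox2}. The $3$-skeleton obstruction to retracting $F$ down to $S^2$ is then a Hopf-invariant-type quantity on each $4$-cell, computed by integrating a form a bounded distance from $h(y)$; since $h(y)$ has bounded integrals, these obstructions are uniformly bounded, and — combining the local cancellation ideas from \cite{FWPNAS} and \cite{cob} — each can be killed by modifying $F$ within a bounded neighborhood, in a uniformly mosaic way.

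The main obstacle, I expect, is the \textbf{transition from the algebra back to geometry}: the algebraic nullhomotopy $h$ lives in the world of PL/smooth forms, but what I actually need is a genuine map $S^3\times I \to S^2$ realizing $h$ up to bounded distortion. Matching the differential-form data $h(x), h(y)$ to a cellular map into $\mathbb{C}\mathbf{P}^2$, and then running obstruction theory for the principal fibration $S^1 \to S^3 \to \mathbb{C}\mathbf{P}^2$ (equivalently, the Hopf fibration structure) in a \emph{local}, cell-by-cell fashion with uniformly bounded corrections, is the delicate part; this is where the positive-weights property of $S^2$ enters, allowing the large $dt$-free... rather, large antiderivative terms to be suppressed by shrinking $dt$. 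I would also need to be careful that the error terms introduced at the $\mathbb{C}\mathbf{P}^2$ stage — the $O(1)$ discrepancies from Lemma \ref{Zapprox2} and from the filling inequalities — genuinely stay bounded and do not compound, which works here precisely because we only iterate the Postnikov-lifting once (the $q=2$ case), and breaks for $q \geq 3$ for the reason noted in the introduction. Finally, a bookkeeping step assembles the three homotopies — simplicial approximation, the correction into $S^2$ via $\mathbb{C}\mathbf{P}^2$, and the final contraction — checking that the total thickness is $O(L+1)$ and total width $O((L+1)^2)$.
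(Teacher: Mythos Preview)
Your proposal is correct and matches the paper's approach: simplicially approximate, build a mosaic nullhomotopy $F$ into $\mathbb{C}\mathbf{P}^2$ governed by the rounded cochain $\delta\hat a$, then use the algebraic nullhomotopy to exhibit the Hopf obstruction cochain $\int F^*\xi^2$ as the coboundary of a uniformly bounded integral cochain $b$, and modify $F$ on $3$-cells accordingly. The paper's precise mechanism for bounding the obstruction is the decomposition $\hat\omega=\bar\omega+\Delta\omega$ (with $\bar\omega$ the evaluated $\bar h(x)$ and $\Delta\omega$ the bounded rounding error), which yields $\int_p\hat\omega^2=\int_{\partial p}\beta$ for an explicit bounded $\beta$; one small slip in your writeup is that the relevant structure is the inclusion $S^2\hookrightarrow\mathbb{C}\mathbf{P}^2=S^2\cup_{\mathrm{Hopf}}e^4$, not a principal $S^1$-fibration over $\mathbb{C}\mathbf{P}^2$.
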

\begin{proof}
  We first give our spaces some extra structure.  We embed $S^2$ in
  $\mathbb{C}\mathbf{P}^2$, giving each the cell structure with one cell in each
  even dimension.  We give $S^3$ a simplicial structure which is an $L$-regular
  subdivision of some standard one, for example that of $\partial\Delta^4$.
  Finally, let $I=[0,1]$ be given the simplicial structure with
  $C_{\mathrm{IP}}(S^2,2)L^2$ edges of equal length.

  By postcomposing a simplicial approximation with a map contracting simplices,
  and at the cost of a multiplicative increase in $L$, we can assume that maps
  $f:S^3 \to S^2$ are cellular and uniformly mosaic, with restrictions to
  2-simplices having degree between $-1$ and $1$.  We now use a construction
  similar to that of \cite[Thm.~4.2]{cob} to construct a complex $\mathcal{G}$
  (independent of $f$) and a $\mathcal{G}$-mosaic nullhomotopy
  $$F:S^3 \times I \to \mathcal{G} \to \mathbb{C}\mathbf{P}^2.$$
  Since $f$ is cellular, we can define a cochain $w \in C^2(S^3;\pi_2(S^2))$ by
  $\langle w,c \rangle=[f|_c] \in \pi_2(S^2)$.  Since $f$ is nullhomotopic, this
  cochain is the coboundary of some $a \in C^1(S^3;\pi_2(S^2))$.  By Lemma
  \ref{lem0'}, since $\lVert w \rVert_\infty=1$, we can pick $a$ such that
  $\lVert a \rVert_\infty \leq C_{\mathrm{IP}}L$.

  Now let $\hat a \in C^1(S^3 \times I;\pi_2(S^2))$ be defined by
  $$\begin{aligned}
    \left\langle \hat a, v \times
    \left[\frac{i}{C_{\mathrm{IP}}L^2},\frac{i+1}{C_{\mathrm{IP}}L^2}\right]
    \right\rangle &=0 &
    \begin{array}{r}
      \text{for $0$-simplices $v$ of }S^3,\\
      0 \leq i \leq C_{\mathrm{IP}}L^2;
    \end{array} \\
    \left\langle \hat a, e \times \left\{\frac{i}{C_{\mathrm{IP}}L^2}
    \right\}\right\rangle &
    =\left\lfloor \left(1-\frac{i}{C_{\mathrm{IP}}L^2}\right)^2\langle a,e \rangle
    \right\rfloor &
    \begin{array}{r}
      \text{for $1$-simplices $e$ of }S^3,\\
      0 \leq i \leq C_{\mathrm{IP}}L^2.
    \end{array}
  \end{aligned}$$
  In other words, $\hat a$ is the ``rounded off'' version of the cochain $\bar a$
  whose value on $e \times \{t\}$ is $(1-t^2)\langle a,e\rangle$.  This ensures
  that the cochain $\delta\hat a$ has the following properties:
  \begin{enumerate}
  \item $\lVert \delta\hat a \rVert_\infty \leq 3$;
  \item $\delta\hat a|_{S^3 \times \{0\}}=w$;
  \item and $\delta\hat a|_{S^3 \times \{1\}}=0$.
  \end{enumerate}
  This allows us to build $F:S^3 \times I \to \mathbb{C}\mathbf{P}^2$ by skeleta
  as follows.  Send the $1$-skeleton to the basepoint; this gives us
  $\mathcal{G}_0$ and $\mathcal{G}_1$ with one map each.  Then send each 2-cell
  $c$ to $S^2 \subset \mathbb{C}\mathbf{P}^2$ via a fixed map of degree
  $\langle \delta\hat a, c \rangle$.  This gives us a $\mathcal{G}_2$ with one
  cell per degree between $-3$ and $3$ and shape of cell, and a partial map
  $F:(S^3 \times I)^{(2)}$ which can be extended to the 3-skeleton with no
  obstruction since $\delta\hat a$ evaluates to zero on cycles.  For each
  possible map on the boundary of a 3-cell, we fix a filling, giving
  $\mathcal{G}_3$ and an extension of $F$ to the 3-skeleton.  Since there is no
  obstruction to extending the map to the 4-skeleton, we again fix a filling for
  each possible map on the boundary of a 4-cell.  At each step, we also include
  the zero map and the restrictions of $f$ to simplices in the corresponding
  skeleton of $\mathcal{G}$, and ensure that the restriction to
  $S^3 \times \{0,1\}$ is correct.  This completes the construction of $F$.

  We will proceed by changing this nullhomotopy into one which maps to $S^2$.
  For this to work, we need to kill the Hopf obstruction; that is, to change its
  behavior on the 3-cells of $S^3 \times I$ so that the restriction to the the
  boundary of each 4-cell of $S^3 \times I$ has Hopf invariant 0.

  Let us translate this into the language of differential forms.  The cohomology
  ring $H^*(\mathbb{C}\mathbf{P}^2;\mathbb{Z})=\mathbb{Z}[x]/(x^3)$, where $x
  \in H^2(\mathbb{C}\mathbf{P}^2;\mathbb{Z})$.  Let $\xi$ be a differential form
  representing $x$, with the extra property that $f^*\xi$ is desimplicial; this
  is possible from the restrictions we put on $f$.  Then for a 4-cell $p$ of
  $S^3 \times I$, $\int_p F^*\xi^2$ is the degree of $F|_p$ over the 4-cell of
  $\mathbb{C}\mathbf{P}^2$, or equivalently the Hopf invariant of $F|_{\partial p}$
  (this restriction is a map to $S^2$ since $F$ is cellular.)  If $\alpha$ is any
  1-form with $d\alpha=F^*\xi$, then this Hopf invariant is given by Stokes'
  theorem by $\int_{\partial p} \alpha \wedge F^*\xi$.  Now suppose we have a
  cochain $b \in C^3(S^3 \times I;\mathbb{Z})$ such that
  $\langle b,\partial p\rangle$, or in other words $\delta b=\int F^*\xi^2$, but
  which (probably unlike $\int \alpha \wedge F^*\xi$) takes uniformly bounded,
  integer values on simplices.  This would allow us to construct the new
  nullhomotopy as follows.  Given two maps $u,v$ from a disk (of any dimension
  $m$) to some other space which coincide on the boundary, let $u * v$ denote the
  map on the $m$-sphere which restricts to $u$ and $v$ on the two hemispheres.
  Then:
  \begin{itemize}
  \item For each 3-cell $q \in S^3 \times I$, replace $F|_q$ with a map $G|_q$
    such that the map $F|_q*G|_q:S^3 \to S^2$ (which behaves like $F$ on the
    upper hemisphere and $G$ on the lower) has Hopf invariant
    $\langle b, q \rangle$.
  \item Extend $G$ to 4-cells; this can be done since the Hopf invariant on the
    boundary of each 4-cell is zero.
  \end{itemize}
  Finding a $b$ which satisfies these properties will be the goal of the rest of
  the proof.

  We note that the behaviors of $F^*\xi$ on $k$-cells are in one-to-one
  correspondence with the set $\mathcal{G}_k$.  For now, though, instead of
  $F^*\xi$ we will use the desimplicial form $\hat\omega:=D_2\int F^*\xi$.  This
  allows us to define a nice antidifferential.

  We write $\omega \in \Omega^2(S^3)$ to mean the restriction of $\hat\omega$ to
  $S^3 \times \{0\}$ (which is also $f^*\xi$.)  Further on, we will also define a
  ``smooth'' interpolation $\bar\omega$ between $\omega$ and $0$, as opposed to
  the ``bumpy'' interpolation $\hat\omega$.  Note also that $\omega$ is the
  ``differential form version'' of the cochain $w$.  We use a similar convention
  for other forms further on.

  For a vector $v \in T_{(x,t)}(S^3 \times I)$, write $v_s$ for its translate in
  $T_{(x,s)}(S^3 \times I)$.  Now, since $\hat\omega|_{S^3 \times \{1\}} \equiv 0$,
  and by the Poincar\'e lemma, the 1-form
  $$\hat\alpha(v)=\int_t^1 \hat\omega(ds,v_s)ds \in \Omega^1(S^3 \times I)$$
  satisfies $d\hat\alpha=\hat\omega$ and $\int\hat\alpha=\hat a$.  Moreover,
  since $\hat\omega$ is desimplicial, this also means that
  $\lVert\alpha\rVert_\infty \leq L\lVert\omega\rVert_\infty$ and, as discussed in
  \S\ref{QDR}, $\alpha$ is desimplicial.  Thus we know that
  $\lVert\hat\alpha\wedge\hat\omega\rVert_\infty \leq CL$, but we don't have a
  constant bound.  On the way to defining the desired cochain $b$, we will find a
  uniformly bounded form $\beta$ such that for any 4-cell $p$,
  $$\int_{\partial p} \beta=\int_{\partial p} \hat\alpha \wedge \hat\omega=
  \int_p \hat\omega^2.$$
  Even then, we will not be able to simply set $b=\int\beta$, both because
  $\int\beta$ may not be integral and because $\delta\int\beta=\int\hat\omega^2$,
  which is potentially a different cochain from $\int F^*\xi^2$.  Nevertheless,
  after $\beta$ is constructed, there is only a short way to go to building $b$.

  To construct $\beta$, we recall the algebraic nullhomotopy
  $\bar h:\mathcal{M}^*(S^2) \to A^*(S^3) \otimes \mathbb{Q}\langle t,dt \rangle$
  from the introduction, given by
  \begin{align*}
    x &\mapsto \omega \otimes (1-t)^2-\alpha \otimes 2(1-t)dt \\
    y &\mapsto \eta \otimes 4(1-t)^3dt,
  \end{align*}
  where we choose $\alpha$ and $\eta$ so that $\alpha=\hat\alpha|_{S^3 \times \{0\}}$
  and $d\eta=\alpha \wedge \omega$.  Note that our isoperimetric results mean
  that we can choose $\eta$ to have $\infty$-norm $O(L^2)$.  Moreover, since we
  have subdivided the interval into $O(L^2)$ pieces, $dt$ thought of as a 1-form
  on this subdivision has $\infty$-norm $O(L^{-2})$.  Now, $\hat\omega^2$ can be
  thought of as an approximation of
  $$\bar h(x^2)=d\bar h(y)=\alpha \wedge \omega \otimes 4(1-t)^3dt.$$
  Therefore we can use the bounded form $\bar h(y)=\eta \otimes 4(1-t)^3dt$ as a
  scaffolding to help us build a form with bounded $\infty$-norm whose derivative
  is $\hat\omega^2$.

  To this end, writing $\pi:S^3 \times [0,1] \to S^3$ for the projection onto
  the first factor, we let
  \begin{align*}
    \Delta\alpha &:= \hat\alpha-\bar\alpha,\text{ where }\bar\alpha=
    (1-t)^2\pi^*\alpha \\
    \Delta\omega &:= d\Delta\alpha=\hat\omega-\bar\omega,\text{ where }
    \bar\omega=(1-t)^2\pi^*\omega-2(1-t)dt \wedge \pi^*\alpha.
  \end{align*}
  In other words, $\Delta\alpha$ is the (bounded!) difference between
  $\hat\alpha$ and the form that $\hat\alpha$ would be if we hadn't had to take
  integer parts in the construction of its cochain counterpart $\hat a$.  So by
  construction, $\Delta\alpha$ and $\Delta\omega$ are both bounded.

  Now, by Stokes' theorem, for any $4$-cell $p$ of $S^3 \times I$, the Hopf
  invariant of $F$ on its boundary is given by
  \begin{align*}
    \int_p \hat\omega^2 &= \int_p \left[(\Delta\omega)^2
    +2\bar\omega \wedge \Delta\omega+\bar\omega^2\right] \\
    &= \int_p \left[(2\hat\omega-\Delta\omega) \wedge \Delta\omega
      -4(1-t)^3dt \wedge \pi^*(\alpha \wedge \omega)\right] \\
    &= \int_{\partial p} \left[(2\hat\omega-\Delta\omega) \wedge \Delta\alpha
      -4(1-t)^3dt \wedge \pi^*\eta\right].
  \end{align*}
  Here, the equality between the first and second lines holds because $\alpha
  \wedge \alpha$ and $\omega \wedge \omega$ are both zero.

  Call the integrand in the last line $\beta$.  We see that both terms of $\beta$
  are uniformly bounded and are zero when restricted to $S^3 \times \{0,1\}$.

  Now consider the uniformly bounded cellular cochain $\int\beta \in
  C^3(S^3 \times I;\mathbb{R})$.  We have $\delta\int\beta=\int\hat\omega^2$, but
  it may not be the case that $\int \hat\omega^2$ is the same cochain as
  $\int F^*\xi^2$, which is the degree of $F$ on 4-cells.  This can be resolved
  in the following manner.  Recall that $F$ factors through maps
  $$S^3 \times I \xrightarrow{G} \mathcal{G} \xrightarrow{H}
  \mathbb{C}\mathbf{P}^2,$$
  where $\mathcal{G}$ is a fixed finite polyhedral complex independent of $L$.
  Then $\left(D_2\int H^*\xi\right)^2$ and $H^*\xi^2$ are well-defined,
  cohomologous forms on $\mathcal{G}$ and thus there is a cellular cochain
  $\mathfrak{g} \in C^3(\mathcal{G};\mathbb{R})$, again independent of $L$, such
  that
  $$d\mathfrak{g}={\textstyle\int} \left(D_2{\textstyle\int} H^*\xi\right)^2-
  {\textstyle\int} H^*\xi^2.$$
  Then $b'=\int\beta-G^*\mathfrak{g}$ is a uniformly bounded cochain on
  $S^3 \times I$ with $\delta(\int\beta-G^*\mathfrak{g})=\int F^*\xi^2$.  This
  cochain is not integral, but it does restrict to zero on $S^3 \times \{0,1\}$.

  By Lemma \ref{Zapprox2}, we can find an integral cochain
  $b_0 \in C^2(S^3;\mathbb{Z})$ such that for every 2-simplex $q$ of $S^3$,
  $$|\langle b_0, q\rangle-\langle b',q \times [0,1]\rangle| \leq K(S^3,3).$$
  We then set $b$ by taking nearest integers to $b'$, similarly to how we
  constructed $\hat a$ from $\bar a$.  Specifically, we set
  $\langle b, q \times [(i-1)/C_{\mathrm{IP}}L^2,i/C_{\mathrm{IP}}L^2]\rangle$ so that
  $$\langle b-b',q \times [0,i/C_{\mathrm{IP}}L^2]\rangle \in [0,1),$$
  for $i \neq C_{\mathrm{IP}}L^2$ (these values are at most distance 1 from those of
  $b'$) and set the values on the last time increment so that
  $\langle b,q \times [0,1]\rangle=\langle b_0,q \rangle$ (and hence they are at
  most $K(S^3,3)+1$ away from those of $b'$).  This together with the requirement
  that $\delta b=\int F^*\xi^2$ fixes the values on the transverse 3-simplices of
  $S^3 \times [0,1]$; these values are at most distance 4 from those of $\beta$.
  Therefore we get
  $$\lVert b \rVert_\infty \leq \lVert b' \rVert_\infty+\max\{K(S^3,3)+1,4\}.$$
  This is a uniform bound and completes the proof.
\end{proof}

\section{Lifting through $k$-invariants}

We now extend the argument for $S^3 \to S^2$ to a setting which is still
geometrically constrained, but which contains a larger class of rational homotopy
types which, together with the rational invariance results in the next section,
can be assembled into the final result.
\begin{thm} \label{thm:lvl2}
  Let $X$ be a finite $N$-dimensional simplicial complex.  For $i=1,\ldots,r$,
  let $n_i \geq 2$ and let $B_i$ be a finite CW complex with an $(N+1)$-connected
  map $B_i \to K(\mathbb{Z},n_i)$ whose CW structure is that of a simplicial
  complex with the $(n_i-1)$-skeleton collapsed.  Define a CW-complex
  $B=\prod_{i=1}^r B_i$.  For some $2 \leq n \leq N$, let $Y$ be a finite
  subcomplex, whose inclusion map is $(N+1)$-connected, of the total space of a
  $K(\mathbb{Z},n)$-fibration over $B$, with projection map $p:Y \to B$.  Then
  there is a $C(X,Y)$ such that any nullhomotopic $L$-Lipschitz map $f:X \to Y$
  has a nullhomotopy of width $C(L+1)^2$ and thickness $C(L+1)$.
\end{thm}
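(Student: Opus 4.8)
The plan is to run the same argument as in the $S^3 \to S^2$ case, but organized to handle an arbitrary base $B$ which is a product of (truncated) Eilenberg--MacLane spaces $K(\mathbb{Z},n_i)$ and a single additional $K(\mathbb{Z},n)$-fibration on top. First I would reduce to the case where $f$ is cellular and uniformly mosaic on an $L$-regular subdivision of $X$, at the cost of a multiplicative increase in $L$, using Proposition~\ref{prop:quantitative_simplicial_approximation} and a collapse; I'd also fix, once and for all, an interval $I=[0,1]$ subdivided into $O(L^2)$ edges. The key structural fact we get for free is that since each $B_i \to K(\mathbb{Z},n_i)$ is $(N+1)$-connected and $\dim X = N$, a map $X \to B_i$ is determined up to homotopy by a single integral cohomology class in degree $n_i$; likewise a nullhomotopy is a single $(n_i-1)$-cochain on $X \times I$. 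So the composite $p \circ f : X \to B$ corresponds to an $r$-tuple of cocycles $w_1,\dots,w_r$, and nullhomotopy of $f$ forces each of these to be a coboundary.

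Next I would build, by obstruction theory over the product cell structure on $X \times I$, a ``model'' nullhomotopy $F : X \times I \to \mathcal{G} \to Y$ with $\mathcal{G}$ a fixed finite polyhedral complex independent of $L$, exactly as in the $S^3 \to S^2$ proof: on each factor $B_i$, use Lemma~\ref{lem0'} to antidifferentiate $w_i$ by an $(n_i-1)$-cochain $a_i$ of norm $O(L)$, build the ``rounded-off'' interpolating cochain $\hat a_i$ on $X \times I$ whose coboundary has norm $O(1)$, and realize the resulting cellular data by a map to $B_i$; take the product to get a map to $B$, then lift over the $K(\mathbb{Z},n)$-fibration. The only obstruction to lifting is the pullback of the $k$-invariant $k \in H^{n+1}(B;\mathbb{Z})$, which is represented by a polynomial in the degree-$n_i$ generators; since we have the algebraic nullhomotopy of the base in hand, we can mimic the differential-forms bookkeeping. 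Concretely, fix forms $\xi_i \in \Omega^{n_i}(B)$ representing the generators with $f^*\xi_i$ desimplicial, let $\hat\omega_i := D_{n_i}\!\int F^*\xi_i$, antidifferentiate in the time direction to get $\hat\alpha_i$ with $\|\alpha_i\|_\infty \le O(L\|\omega_i\|_\infty)$ (using remark (4) of \S\ref{QDR}), and split $\hat\alpha_i = \bar\alpha_i + \Delta\alpha_i$ into a ``smooth'' part $\bar\alpha_i = (1-t)^2\pi^*\alpha_i$ and a uniformly bounded correction $\Delta\alpha_i$. The algebraic nullhomotopy of the base --- which exists because the $B_i$ are formal (being truncated Eilenberg--MacLane spaces) --- provides, for the single obstruction class, a bounded ``scaffolding'' form $\bar h(y)$ with $d\bar h(y)$ a sum of products $\bar\omega_{i_1}\wedge\cdots$, against which we measure $\prod \hat\omega$. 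Expanding $\prod \hat\omega = \prod(\bar\omega_i + \Delta\omega_i)$, every term either contains a $\Delta\omega$ factor (hence is bounded after one application of Stokes, pairing it with the bounded $\Delta\alpha$) or is the pure scaffolding term $d\bar h(y)$ (hence is the coboundary of the bounded form $\bar h(y)$). This yields a uniformly bounded form $\beta$ on $X\times I$, zero on $X\times\{0,1\}$, with $\int_{\partial p}\beta = \int_p F^*\xi^{\text{(obstruction)}}$ for every top cell $p$.

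Finally I would pass from the bounded real form $\beta$ to a bounded \emph{integral} cochain $b \in C^{N-1}(X\times I;\mathbb{Z})$ with $\delta b = \int F^*k$ (the cellular obstruction cochain): first correct $\int\beta$ by a fixed cochain $G^*\mathfrak{g}$ pulled back from $\mathcal{G}$ to account for the discrepancy between $(D\!\int F^*\xi)^{\,\cdot}$ and $F^*\xi^{\,\cdot}$ as in the warmup, then use Lemma~\ref{Zapprox2} to produce an integral cochain $b_0$ on $X$ matching the total-time integral up to $O(1)$, and round off time-increment by time-increment to get $b$ with $\|b\|_\infty = \|\beta\|_\infty + O(1)$. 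With this $b$ in hand, modify $F$ on the transverse top-dimensional cells of $X\times I$ using the $u*v$ construction so that the obstruction to lifting over the $K(\mathbb{Z},n)$-fibration vanishes on the boundary of each top cell, then extend over those cells; since the whole modification was done cell by cell with uniformly bounded combinatorial data and $I$ had $O(L^2)$ pieces, the resulting genuine nullhomotopy $X\times I \to Y$ has thickness $O(L)$ and width $O(L^2)$. The main obstacle is exactly the bookkeeping in the middle step: keeping every ``error term'' in the expansion of $\prod(\bar\omega_i+\Delta\omega_i)$ uniformly bounded requires that the base have an algebraic nullhomotopy whose non-$dt$ terms don't grow --- this is where the hypothesis that the $B_i$ are (truncated) $K(\mathbb{Z},n_i)$'s, hence formal with an explicitly bounded rational nullhomotopy, is essential, and it is the reason the argument stops at a two-stage Postnikov tower rather than extending to arbitrary depth.
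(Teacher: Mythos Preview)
Your approach is essentially the paper's, and most of the architecture is right: build a mosaic nullhomotopy $F$ into $B$ coordinatewise via the rounded-off cochains $\hat a_i$, set up the algebraic nullhomotopy, expand $P(\hat\omega) = P(\bar\omega + \Delta\omega)$, and use the bounded error terms together with the scaffolding form to extract a bounded $\beta$ whose coboundary is the obstruction cochain. But there is a genuine dimension confusion that would cause trouble if you tried to write the argument out.

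The obstruction to lifting $F$ from $B$ to $Y$ lives in degree $n+1$, not $N+1$: the fiber is $K(\mathbb{Z},n)$ with $2 \le n \le N$, so the cochain $b$ must lie in $C^n(X \times I;\mathbb{Z})$, and the modification happens on $n$-cells, not on ``top-dimensional'' cells. In the $S^3 \to S^2$ warmup these coincide ($n = N = 3$), which is why the distinction is invisible there. Once you modify the partial lift on $n$-cells to kill the $(n+1)$-dimensional obstruction, you still have to extend over the $(n+1)$- through $(N+1)$-skeleta; this is unobstructed since $\pi_k(K(\mathbb{Z},n)) = 0$ for $k > n$, but it has to be done mosaically (the paper builds a complex $\tilde{\mathcal{G}}$ recording one extension per boundary type). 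Relatedly, your $\bar\alpha_i = (1-t)^2\pi^*\alpha_i$ should be $(1-t)^{n_i}\pi^*\alpha_i$, and $F$ is a nullhomotopy into $B$, not into $Y$ --- the whole point of the second half is to lift this $F$ to $Y$.

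Two smaller points the paper handles that you gloss over. First, it separates off the case where the rational $k$-invariant has an indecomposable (linear) summand: then $Y$ is rationally still a product of Eilenberg--MacLane spaces through dimension $N$ and the result follows directly from \cite{cob} with \emph{linear} width. You implicitly assume $P$ has no linear term when you say ``every term contains a $\Delta\omega$ factor or is the pure scaffolding term,'' so this reduction should be stated. Second, making $f$ itself (not just $p \circ f$) uniformly mosaic requires lifting a short homotopy through the fibration $p$; the paper does this skeleton by skeleton, and the $n$-skeleton step needs a brief argument to keep the number of lift-types uniformly finite.
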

Note that $K(\mathbb{Z},n)$-fibrations over $B$ are, up to equivalence, in
bijection with elements of $H^{n+1}(B)$ which represent the obstruction to
constructing a section, and that any such fibration can be made to have finite
skeleta, for example using Milnor's construction \cite{Milnor}.

The proof follows an outline similar to the special case in the previous
section.  The main differences are technicalities imposed by the need to lift
through a fibration where in the last section we retracted.
\begin{proof}
  Up to dimension $N$, $H^*(B;\mathbb{Q})$ is naturally isomorphic to a free
  graded commutative $\mathbb{Q}$-algebra generated by elements of degree $n_i$.
  Suppose first that the primary (and only) obstruction in $H^{n+1}(B;\mathbb{Q})$
  to trivializing $p$ has an indecomposable summand in this algebra.
  Equivalently, $\pi_n$ of the fiber goes to a finite quotient in $Y$, so up to
  rational homotopy type up to dimension $N$, $Y$ is still a product of
  Eilenberg--MacLane spaces.  This case follows directly from the main theorem of
  \cite{cob}, and linear nullhomotopies can be found; therefore, in the rest of
  this proof, we assume that this obstruction class is contained in the ideal
  generated by products in $H^*(B;\mathbb{Q})$.

  We start by showing that $f$ can be assumed to take on a certain structure, in
  particular being uniformly mosaic on a subdivision of $X$ at scale $L$.  We
  will implicitly work with such a subdivision; when we take the $L^\infty$ norm
  of forms on $X$, we will do so with respect to the metric in which the
  simplices of the subdivision are of unit size.

  Let $\pi_i$ be the projection $B \to B_i$.  We can simplicially approximate a
  map homotopic to $\pi_i \circ p \circ f$ on the distinguished simplicial model
  of $B_i$, then send it back to $B_i$ via the map collapsing the
  $(n_i-1)$-skeleton.  This gives us a short homotopy between $\pi_i \circ p
  \circ f$ and a cellular map on an $O(L)$-regular subdivision of $X$ which is
  $\mathcal{F}^i$-mosaic for some $\mathcal{F}^i$ depending only on $B_i$ and
  the homotopy equivalence.  This gives us a short homotopy $E_t$ from
  $p \circ f$ to a $\mathcal{F}$-mosaic map, where $\mathcal{F}_k=
  \prod_{i=1}^r \mathcal{F}^i_k$ and the boundary maps are also products.

  Finally, we would like to lift $E_t$ to a short homotopy of $f$.  By homotopy
  lifting, this can be done, but we would like it done quantitatively in order
  to produce a short homotopy $\tilde E_t$ from $f$ to an
  $\tilde{\mathcal{F}}$-mosaic map for some $\tilde{\mathcal{F}}$.  Therefore we
  do this by skeleta.  For $k<n$, we can choose a unique lift for every
  $k$-simplex of $\mathcal{F}$.  Now let $c$ be an $n$-simplex of $X$.  We would
  like to show that we can lift $E_t|_c$ so that $\tilde E_1|_c$ is one of a
  uniformly finite number of maps.

  Let $\tilde c=c \times \{0\} \cup \partial c \times [0,1]$.  Since $E$ is
  uniformly Lipschitz with respect to the standard metric on the subdivision, we
  can simplicially approximate $E|_{\tilde c}$ at a uniform scale.  In particular,
  if $u:\tilde c \times [0,1]$ is the linear homotopy to the simplicial
  approximation, the map $u|_{\tilde c \times \{1\} \cup \partial \tilde c \times [0,1]}$
  takes on a uniformly finite number of values which we include in
  $\tilde{\mathcal{F}}_n$.  We can take this to be the map $\tilde E_1|_c$.

  Finally, for higher skeleta all lifts are again homotopic, and so when $k>n$
  we can take a unique lift for every restriction of $\tilde E_t$ to the
  boundary of a $k$-cell.  The set of such lifts will be called
  $\tilde{\mathcal{F}}_k$.

  Now, at the cost of a linear penalty on $L$, we can assume that $f$ is
  $\tilde{\mathcal{F}}$-mosaic, and therefore each $f_i$ is
  $\mathcal{F}^i$-mosaic.  For each $f_i$, a construction similar to that of the
  homotopy $F$ in the previous section builds a nullhomotopy
  $F_i:X \times I \to B_i$ to the following specifications.
  \begin{itemize}
  \item $F_i$ is $\mathcal{G}^i$-mosaic for some $\mathcal{G}^i$, again
    depending only on $B_i$, on a cell structure obtained by splitting the
    interval $I$ into $C_\ell^2L^2$ equal subintervals, where $C_\ell=
    \max_{n \leq N} C_{\mathrm{IP}}(X,n)$;
  \item The degree of $F_i$ on $n_i$-cells of this cell structure is as follows.
    Let $w_i \in C^{n_i}(X)$ be the cochain whose values are the degrees of $f_i$
    on simplices, and let $a_i \in C^{n_i-1}(X)$ be a cochain with $\delta a_i=w_i$
    and $\lVert a_i \rVert_\infty \leq C_{\mathrm{IP}}L\lVert w_i \rVert_\infty$.  Such
    an $a_i$ exists since $f_i$ is nullhomotopic.  Then the degree of $F_i$ on a
    cell $c$ is given by $\langle\delta\hat a_i,c\rangle$, where
    $\hat a_i \in C^{n_i-1}(X \times I)$ is defined by
    $$\langle \hat a_i, c \times \{t\} \rangle=
    \lfloor (1-t)^{n_i} \langle \hat a_i,c \rangle\rfloor$$
    on cells of that form and is zero on cells which extend in the time
    direction.  Since the derivative of $(1-t)^{n_i}$ and the values of $w_i$ are
    uniformly bounded, so are the values of $\delta \hat a_i$.
  \end{itemize}
  Then $F=(F_1,\ldots,F_r)$ is a nullhomotopy $F$ of $f$ in $B$ which is
  $\mathcal{G}$-mosaic, where once again $\mathcal{G}_k=
  \prod_{i=1}^r \mathcal{G}^i_k$.  Our plan is to find a nullhomotopy in $Y$ which
  projects onto $F$, again modeled on an algebraic nullhomotopy
  $$\bar h:\mathcal{M}^*(Y) \to \Omega^*(X) \otimes
  \mathbb{Q}\langle t,dt \rangle.$$
  The minimal model of $B$ is a free algebra $\mathcal{M}(B)$ with trivial
  differential on the $n_i$-dimensional generators $x_i$ corresponding to the
  fundamental class of each $B_i$.  The projection $p:Y \to B$ corresponds to an
  extension
  $$p^*:\mathcal{M}^*(B) \to \mathcal{M}^*(Y)=\mathcal{M}(B) \otimes
  \mathbb{Q}\langle y \rangle,$$
  where $dy=P(x_1,\ldots,x_r)$ is the aforementioned cohomological obstruction in
  $\mathbb{Q}\langle x_1,\ldots,x_r \rangle$ to finding a section of $Y$; this is
  a polynomial all of whose terms have total degree at least 2.

  For each $i$, let $\xi_i \in \Omega^{n_i}(B_i)$ be a form representing the
  fundamental class of $B_i$.  We then write
  $$\omega_i=D_{n_i}{\textstyle \int f^*p^*p_i^*\xi_i}.$$
  Note that since $f_i$ is a composition of a simplicial map and a collapse,
  $\omega_i$ is the pullback of a desimplicial form $\xi_i'$ representing the
  fundamental class in $H^{n_i}(K(\mathbb{Z},n_i))$.  Thus we can find a form
  $\nu=f^*\zeta$ where
  $$d\zeta=P(p_1^*\xi_1',\ldots,p_r^*\xi_r').$$
  (Further on, we will write this as $P(\overrightarrow{p_-^*\xi'})$.)  Since
  $\zeta$ doesn't depend on $f$ or $X$, $\lVert\nu\rVert_\infty$ is uniformly
  bounded, as are the $\omega_i$.  Similar to the previous section, we have that
  for an $(n+1)$-simplex $p$ of $X$,
  $$\int_{\partial p} \nu=\int_p P(\vec\omega) \neq
  \int_p P\bigl(\overrightarrow{f^*p^*p_{-}^*\xi}\bigr),$$
  but they differ by a small coboundary and we will later need to take this into
  account.

  We therefore get a homomorphism $\bar f:\mathcal{M}^*(Y) \to \Omega^*(X)$
  defined by $x_i \mapsto \omega_i$ and $y \mapsto \nu$.  Since $f$ is
  nullhomotopic, we can build the algebraic nullhomotopy $\bar h$ of $\bar f$ as
  follows.  For any DGA $A$ define an operator
  $\int_0^1:A \otimes \langle t,dt \rangle \to A$ by
  $${\textstyle\int_0^1} a \otimes t^i=0,
  {\textstyle\int_0^1} a \otimes t^idt=(-1)^{\deg a}\frac{a}{i+1}.$$
  Then send
  \begin{align*}
    x_i &\mapsto \omega_i \otimes (1-t)^{n_i}+(-1)^{n_i+1}\alpha_i \otimes
    n_i(1-t)^{n_i-1}dt \\
    y &\mapsto \nu \otimes (1-t)^{n+1}+\eta \otimes (n+1)(1-t)^ndt,
  \end{align*}
  where $\alpha$ and $\eta$ are chosen so that $d\alpha_i=\omega_i$ and
  $d\eta={\textstyle\int_0^1} \bar h(P(\vec x))+(-1)^{n+1}\nu$.

  Note that the terms of $\int_0^1 \bar h(P(\vec x))$ are each a product
  of some $\omega_i$'s together with one $\alpha_i$.  Since $\alpha_i$ may be
  chosen so that $\lVert\alpha_i\rVert_\infty=O(L)$, this means that $\eta$ may be
  chosen so that $\lVert\eta\rVert_\infty=O(L^2)$.

  On the other hand, define a form $\hat\omega_i=D_{n_i}\int F_i^*p_i^*\xi_i \in
  \Omega^{n_i}(X \times [0,1])$.  This gives us a homomorphism
  $\bar F:\mathcal{M}(B) \to \Omega^*(X \times [0,1])$.

  Write $\pi:X \times [0,1] \to X$ for the projection onto the first factor.
  For a multivector $\xi \in T^n_{(x,t)}(X \times I)$, and writing $\xi_s$ for its
  parallel translate in $T^n_{(x,s)}(X \times I)$, let
  $$\hat\alpha_i(\xi)=\int_t^1 \omega_i(ds,\xi_s)ds.$$
  Then $d\hat\alpha_i=\hat\omega_i$.  Now defining forms $\Delta\alpha_i$,
  $\Delta\omega_i$, $\bar\alpha_i$ and $\bar\omega_i$ by
  \begin{align*}
    \Delta\alpha_i &:= \hat\alpha_i-\bar\alpha_i
    := \hat\alpha_i-(1-t)^{n_i}\pi^*\alpha_i \\
    \Delta\omega_i &:= d\Delta\alpha_i = \hat\omega_i-\bar\omega_i \\
    &:= \hat\omega_i-(1-t)^{n_i}\pi^*\omega_i
    -n_i(1-t)^{n_i-1}dt \wedge \pi^*\alpha_i,
  \end{align*}
  we get $\lVert\Delta\omega_i\rVert_\infty \leq C(N,B)$ and
  $\lVert\Delta\alpha_i\rVert_\infty \leq C(N,B)$.

  Now, by Stokes' theorem, for every $(n+1)$-cell $c$ of $X \times [0,1]$,
  \begin{align*}
    \int_c \bar F(P(\vec x)) &= \int_c P\left(\overrightarrow{\Delta\omega}
                               -\overrightarrow{\bar\omega}\right) \\
    &= \int_c \left[\sum_i \Delta\omega_i
      \poly\left(\overrightarrow{\Delta\omega},\overrightarrow{\bar\omega}\right)
      -P\bigl(\vec{\bar\omega}\bigr)\right] \\
    &= \int_{\partial c} \left[\sum_i \Delta\alpha_i
      \poly\left(\overrightarrow{\Delta\omega},\overrightarrow{\bar\omega}\right)
      -(1-t)^{n+1}\pi^*\nu-(n+1)(1-t)^n\pi^*\eta \wedge dt\right].
  \end{align*}
  Call the integrand in the previous line $\beta$.  Then since
  $\lVert dt \rVert_\infty=1/C_\ell^2L^2$ and the polynomials we have elided can
  be chosen so as to depend only on $P$, $\beta$ satisfies
  $\lVert \beta \rVert_\infty \leq C(N,Y)$ and $d\beta=P(\vec\omega)$.

  Now we are ready to construct a lift $\tilde F:X \times I \to Y$ of $F$.
  Since $F$ is $\mathcal{G}$-mosaic, we can view it as a composition
  $$X \times I \xrightarrow{G} \mathcal{G} \xrightarrow{H} B.$$
  Now let $\mathcal{G}^\prime$ be the smallest complex which surjects onto
  $\mathcal{G}$ and such that $\tilde{\mathcal{F}}$ in turn injects into it,
  with the composition induced by the projection $\tilde{\mathcal{F}} \to
  \mathcal{F}$.  In particular, $(\mathcal{G}^\prime)^{(n-1)}=\mathcal{G}^{(n-1)}$
  but in the $n$-skeleton, some cells have a number of duplicates compared to
  $\mathcal{G}$.  Then there are obvious maps
  $$X \times I \xrightarrow{G^\prime} \mathcal{G}^\prime \xrightarrow{H^\prime} B.$$
  We can build a partial lift of $H^\prime$ to $Y$ by lifting each map in
  $\mathcal{G}_k^\prime$, for each $k \leq n$, using the lift in
  $\tilde{\mathcal{F}}$ where it exists.  This then gives us a map
  $\tilde H:(\mathcal{G}^\prime)^{(n)} \to Y$ and an obstruction cocycle
  $\mathfrak{o} \in C^{n+1}(\mathcal{G}^\prime,\tilde{\mathcal{F}};\mathbb{Z})$ to
  extending it to $(\mathcal{G}^\prime)^{(n+1)}$ which is independent of $f$.

  Now, $\mathfrak{o}$ and $P(D_2\int (H^\prime)^*p^*p_i^*\xi_i)$ are both
  representatives of the obstruction class in
  $H^{n+1}(\mathcal{G}^\prime,\tilde{\mathcal{F}};\mathbb{R})$ to lifting $H$ to a
  map $\mathcal{G} \to Y$.  Therefore, there is a cellular cochain $\mathfrak{a}
  \in C^n(\mathcal{G}^\prime,\tilde{\mathcal{F}};\mathbb{R})$ such that
  $$\delta\mathfrak{a}={\textstyle \int} P(D_2{\textstyle \int} (H^\prime)^*p^*p_i^*
  \xi_i)-\mathfrak{o}.$$
  Since
  $$\overline{F^*}P(\vec x)=(G^\prime)^*P(D_2{\textstyle \int}
  (H^\prime)^*p^*p_i^*\xi_i),$$
  the uniformly bounded cochain $\int \beta-(G^\prime)^*\mathfrak{a} \in
  C^n(X \times I;\mathbb{R})$ satisfies
  $$\delta({\textstyle\int\beta}-(G^\prime)^*\mathfrak{a})=
  (G^\prime)^*\mathfrak{o}.$$
  This cochain is not integral, but we can use the method in the previous section
  to find a nearby integral cochain $\mathfrak{b}$ with the same coboundary, and
  such that it is still zero on $X \times \{0,1\}$.

  We will use $\mathfrak{b}$ to construct a lift of $F$ to $Y$ which is
  $\tilde{\mathcal{G}}$-mosaic for a $\tilde{\mathcal{G}}=
  \tilde{\mathcal{G}}(X,Y)$ which we first construct.  Let $\tilde{\mathcal{G}}$
  contain $\tilde{\mathcal{F}}$ and for $k \leq n-1$ let $\tilde{\mathcal{G}}_k$
  consist of the $\tilde H$-lifts of $\mathcal{G}^\prime_k$.  Next, for every value
  $\gamma$ that may be taken by $\mathfrak{b}$ and every element
  $\delta \in \mathcal{G}^\prime_n \setminus \tilde{\mathcal{F}}_n$ we add in a
  lift $\ell(\delta,\gamma)$ which differs from the one in $\mathcal{G}^\prime_n$
  by $\gamma$.  Finally, for any $k>n$, any cell of $\mathcal{G}^\prime_k$, and any
  lift of its boundary, we add a single extended lift to $\tilde{\mathcal{G}}_k$
  if one exists.

  Now we modify $\tilde H \circ G^\prime$ to define a map
  $\tilde F|_{(X \times [0,1])^{(n)}}$: for every $n$-cell $c$, we let the map on $c$
  be $\ell(\tilde H \circ G^\prime|_c,-\mathfrak{b}(c))$.  This kills the
  obstruction, allowing us to lift further to construct our
  $\tilde{\mathcal{G}}$-mosaic map $\tilde F:X \times [0,1] \to Y$.
\end{proof}

\section{Rational invariance}

In this section, we show that the difficulty of nullhomotoping $L$-Lipschitz
maps $X \to Y$ depends on $Y$ only up to rational homotopy type.  The proof of
this can be separated into a topological result and a metric result.

We start with the metric result, which is again proven in \cite{cob}.  It shows
that if a map $X \to Z$ is homotopically trivial relative to a subspace
$Y \subset Z$ whose relative homotopy groups are finite, then one can find such
a homotopic trivialization which is geometrically bounded.
\begin{lem} \label{lem:Qlift}
  Let $Y \subset Z$ be a pair of finite simplicial complexes such that
  $\pi_k(Z,Y)$ is finite for $k \leq n+1$.  Then there is a constant $C(n,Y,Z)$
  with the following property.  Let $(X,A)$ be a pair of (not necessarily finite)
  $n$-dimensional simplicial complexes and $f:(X,A) \to (Z,Y)$ a simplicial map
  which is homotopic rel $A$ to a map $g:X \to Y$.  Then there is a short
  homotopy rel $A$ of $f$ to a map $g^\prime$ which lands in $Y$ and is homotopic
  as a map into $Y$ to $g$.  By ``short'', we mean that it is $C$-Lipschitz under
  the standard metric on the product cell structure on $X \times [0,1]$.
\end{lem}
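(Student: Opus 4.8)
The plan is to build the short homotopy skeleton by skeleton over $X$, exploiting the fact that the relative homotopy groups $\pi_k(Z,Y)$ are finite in the relevant range, so that at each stage only finitely many ``local moves'' are needed. First I would reduce to a uniformly mosaic situation: after a bounded simplicial approximation (Proposition \ref{prop:quantitative_simplicial_approximation}) applied compatibly on $X$ and $A$, the map $f$ and the target homotopy $g$ can be taken to be simplicial, so that there is a fixed finite set $\mathcal{F}$ of simplex-restrictions occurring among $f$, $g$, and the homotopy between them. The key point is that since $(X,A)$ may be infinite, we cannot afford to make choices simplex-by-simplex that depend on the global map; instead, for each of the finitely many combinatorial types of simplex-restriction we fix, once and for all, a bounded local homotopy, and then assemble these.

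The construction proceeds inductively on skeleta of $X$ relative to $A$. On $A$ we do nothing (the homotopy is rel $A$). Suppose $f$ has been homotoped on $X^{(k-1)}\cup A$, by a short homotopy, to a map $g^\prime_{k-1}$ landing in $Y$ and agreeing there, up to a bounded homotopy in $Y$, with $g$. For each $k$-simplex $c$ not in $A$, the map $g^\prime_{k-1}$ is already defined on $\partial c$ and lands in $Y$; the map $f|_c$ lands in $Z$ and is, by hypothesis, homotopic rel $\partial c$ into $Y$. The obstruction to pushing $f|_c$ into $Y$ rel $\partial c$ lives in $\pi_k(Z,Y)$ (after comparing with $g$), which is finite; and because $f$ is $\mathcal{F}$-mosaic there are only finitely many combinatorial pictures to resolve. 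Hence we may fix, for each such picture, a single bounded homotopy of $f|_c$ into $Y$ rel $\partial c$ — this is the step that uses finiteness of $\pi_k(Z,Y)$ essentially, guaranteeing that a bounded-size filling of the relevant relative class exists. Performing all these local homotopies simultaneously over the (infinitely many) $k$-simplices of $X$, which is legitimate since each is supported in a single simplex and they agree on shared faces, extends the short homotopy over $X^{(k)}\cup A$. After $n$ steps we reach $g^\prime$ landing in $Y$.

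Two points need care. First, we must also check that $g^\prime$ is homotopic \emph{as a map into $Y$} to $g$, not merely into $Z$; this follows by running the same inductive local-move argument one dimension higher, using finiteness of $\pi_{k}(Z,Y)$ up to $k\le n+1$ (this is why the hypothesis goes one past $\dim X$), so that the homotopy into $Z$ between $g^\prime$ and $g$ can itself be pushed into $Y$. Second, the resulting homotopy $X\times[0,1]\to Z$ must be $C$-Lipschitz for the product cell structure with $C=C(n,Y,Z)$ independent of $(X,A)$ and $f$; this is immediate from the construction, since the homotopy restricted to each product cell $c\times[0,1]$ is one of finitely many fixed maps depending only on the combinatorial type of $f|_c$, and there are only finitely many such types (depending only on $Y$, $Z$, and $n$). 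The main obstacle is the first one — arranging that the correction homotopies chosen locally for adjacent simplices are compatible on their common faces — which is handled by making all choices depend only on the mosaic type of the relevant data on the \emph{closed} simplex (boundary included), so that a face of a simplex inherits the restriction of the choice made for the simplex, exactly as in the mosaic/semi-simplicial formalism of \S2.
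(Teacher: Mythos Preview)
The paper does not actually prove this lemma; it is stated and attributed to \cite{cob}.  So there is no proof in the paper to compare against, and I evaluate your sketch on its own terms.

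Your overall strategy---build the homotopy skeleton by skeleton, with finiteness of $\pi_k(Z,Y)$ guaranteeing that only finitely many local moves are ever needed and hence a uniform Lipschitz bound---is the right one and is essentially the argument in \cite{cob}.  One minor point: the initial simplicial approximation is unnecessary, since $f$ is already assumed simplicial in the hypothesis and is therefore automatically mosaic for the finite set of simplicial maps $\Delta^k \to Z$.

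There is, however, a genuine gap in your inductive step.  You write that for each $k$-simplex $c$, the map $f|_c$ ``is, by hypothesis, homotopic rel $\partial c$ into $Y$.''  But the hypothesis is \emph{global}: $f \simeq g$ rel $A$.  It does not imply that the class in $\pi_k(Z,Y)$ represented by the map on an individual cell (with boundary already pushed into $Y$) vanishes.  What the global hypothesis gives is that the obstruction \emph{cocycle} in $C^k(X,A;\pi_k(Z,Y))$ is a coboundary $\delta b$.  The step you are missing is: since $\pi_k(Z,Y)$ is finite, $b$ takes only finitely many values, so one first modifies the already-constructed map on each $(k-1)$-cell by one of a fixed finite collection of twists indexed by elements of $\pi_k(Z,Y)$; after this modification the obstruction on each $k$-cell vanishes, and the extension step you describe goes through.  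Without this modification, the obstruction on a given $k$-cell may well be a nonzero element of the finite group, and then no ``bounded-size filling of the relevant relative class'' exists at all---finiteness of the group does not make its nonzero elements trivial.  Your remark that finiteness of $\pi_{n+1}(Z,Y)$ is what allows the final comparison $g' \simeq g$ inside $Y$ is correct.
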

Note that the constant $C$ does not depend on $X$ and in particular on the
choice of a subdivision of $X$.  Thus if we consider Lipschitz and not just
simplicial maps from $X$ to $Y$, the width of the homotopy remains constant,
rather than linear in the Lipschitz constant.

We now move on to the topological portion of the discussion, in which we prove
Theorem \ref{intro:Qinv}.  First we state this result more precisely.
\begin{figure} 
	\centering
    \begin{tikzpicture}
      \draw[very thick] (-0.5,0) ellipse (0.5 and 1);
      \draw[very thick] (0,-1) arc (-90:90:0.5 and 1);
      \draw[very thick] (-0.5,-1) -- (0,-1) (-0.5,1) -- (0,1);
      \draw[very thick] (6,-1) arc (-90:90:0.5 and 1);
      \draw[very thick] (6.5,-1) arc (-90:90:0.5 and 1);
      \draw[very thick] (6,-1) -- (6.5,-1) (6.5,1) -- (6,1);
      \draw (0,-1) -- (6,-1);
      \draw (0,1) .. controls +(right:0.5) and
      +(left:0.5) .. (1.5,3) .. controls +(right:0.5) and
      +(left:0.5) .. (3,1) .. controls +(right:0.5) and
      +(left:0.5) .. (4.5,3) .. controls +(right:0.5) and
      +(left:0.5) .. (6,1);
      \node[below] at (-0.25,-1) {$X \times [0,1]$};
      \node[below] at (6.25,-1) {$X \times [0,1]$};
      \node at (3,0) {$K$};
    \end{tikzpicture}
	
    \caption{A camel.  Note the two collars which are isometric to some fixed
      simplicial structure on $X \times [0,1]$.}
  \end{figure}
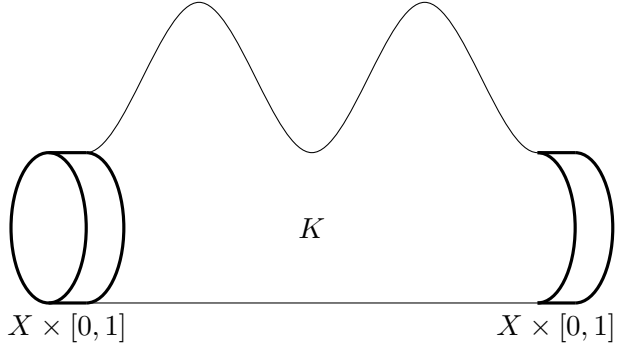
\begin{thm} \label{thm:Qinv}
  Rationally equivalent simply connected finite simplicial complexes admit
  \emph{nullhomotopies of the same shapes}.  That is, suppose we are given the
  following data:
  \begin{enumerate}
  \item Rationally homotopy equivalent simply connected finite metric simplicial
    complexes $Y$ and $Z$;
  \item	A finite $n$-dimensional simplicial complex $X$;
  \item	A simplicial pair $(K, X \times ([0,1] \cup [2,3]))$ which is
    homeomorphic to
    $$(X \times [0,3], X \times ([0,1] \cup [2,3]))$$
    and given the standard metric on simplices.  Here the product of $X$ with
    each unit interval is given an arbitrary fixed simplicial structure which
    restricts at $t=0$ and $t=1$ to the simplicial structure on $X$.
  \end{enumerate}
  Then there is a constant $C=C(X,Y,Z)>0$ such that if for every nullhomotopic
  $L$-Lipschitz map $f:X \rightarrow Y$ there is an $M$-Lipschitz nullhomotopy
  $F:K \rightarrow Y$, then for every $L/C$-Lipschitz map $g:X \rightarrow Z$
  there is a $(CM+C)$-Lipschitz nullhomotopy $G:K \rightarrow Z$.
\end{thm}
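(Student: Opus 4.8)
The plan is to transport the problem from $Z$ back to $Y$, solve it there using the hypothesis, and transport the resulting nullhomotopy forward, all while controlling Lipschitz constants by uniformly bounded amounts. The central device is the \emph{mapping cylinder of a rational equivalence} together with Lemma~\ref{lem:Qlift}, which supplies the metric control needed whenever we move across a map whose relative homotopy groups are finite.

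First I would set up the algebraic-topological scaffolding. Since $Y$ and $Z$ are rationally homotopy equivalent simply connected finite complexes, up to dimension $N := n+1$ (the only range that matters, as $X \times [0,3]$ is $(n+1)$-dimensional) there is a finite simplicial complex $W$ containing both $Y$ and $Z$ as subcomplexes such that both inclusions $Y \hookrightarrow W$ and $Z \hookrightarrow W$ induce isomorphisms on rational homotopy groups in degrees $\le N$; concretely, one builds $W$ by attaching cells to a mapping cylinder of a zig-zag of rational equivalences so as to kill the (finite) relative homotopy groups through dimension $N$. In particular $\pi_k(W,Y)$ and $\pi_k(W,Z)$ are finite for $k \le N+1$ in the relevant range, so Lemma~\ref{lem:Qlift} applies to each pair. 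Fix, once and for all, a simplicial homotopy equivalence structure and finite skeleta; all constants below depend only on $X$, $Y$, $Z$ and these fixed choices.

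Now I would run the transport argument. Given an $(L/C)$-Lipschitz nullhomotopic map $g : X \to Z$, first quantitatively simplicially approximate it (Proposition~\ref{prop:quantitative_simplicial_approximation}) and compose with the inclusion $Z \hookrightarrow W$ to get an $O(L/C)$-Lipschitz map $X \to W$ which is nullhomotopic in $W$. Because $\pi_k(W,Y)$ is finite for $k \le N+1$, Lemma~\ref{lem:Qlift} (applied with $A = \varnothing$, $(X,A) \to (W,Y)$) produces a \emph{short} homotopy — width bounded by a constant independent of $L$, and one should rescale into the first collar $X \times [0,1]$ of $K$ to absorb this into a uniformly Lipschitz map — from our map to a map $f : X \to Y$ which is $O(L/C) \cdot C' = O(L)$-Lipschitz (the simplicial approximation and the fixed homotopy equivalence each cost only a multiplicative constant) and which is nullhomotopic \emph{inside} $Y$, since it was nullhomotopic in $W$ and $\pi_k(W,Y)$ finite forces nullhomotopy in $Y$ rationally, and in fact integrally after possibly enlarging $C$ to kill the finite obstruction. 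Choosing $C$ large enough that this $f$ has Lipschitz constant $\le L$, the hypothesis of the theorem gives an $M$-Lipschitz nullhomotopy $F : X \times [0,1] \to Y$; precompose with a homeomorphism identifying this with a middle portion of $K$. Finally, compose $F$ with $Y \hookrightarrow W$, and use Lemma~\ref{lem:Qlift} once more — now on the pair $(W,Z)$, with the parameter complex being $X \times [0,1]$ and $A$ the end $X \times \{0\}$ where $F$ already agrees (after the first collar homotopy) with $g$ — to push $F$ down to a map into $Z$ at the cost of a constant additive and multiplicative Lipschitz penalty; this yields a $(CM+C)$-Lipschitz nullhomotopy of $g$ in $Z$. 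The three pieces (first collar: the $Z$-to-$Y$ correction; middle: the given $F$; last collar: the $Y$-to-$Z$ correction, followed by contracting to a point) assemble along $X \times \{1\}$ and $X \times \{2\}$ into the required $G : K \to Z$, and the claimed bound follows by tracking the multiplicative constants through each of the finitely many steps.

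The main obstacle is the bookkeeping of \emph{which} Lipschitz penalties are multiplicative in $L$ and which are additive or $L$-independent, and making sure the matching conditions along $X \times \{1\}$ and $X \times \{2\}$ in $K$ are met exactly rather than approximately. The subtle point is that Lemma~\ref{lem:Qlift} gives a homotopy whose width is a \emph{constant} (not $O(L)$), which is exactly what lets the collars of fixed size in $K$ accommodate these corrections without blowing up the total Lipschitz constant; one must be careful that the homotopy furnished by the lemma can be taken rel the relevant boundary face so that the three pieces glue continuously. A secondary technical nuisance is passing from ``nullhomotopic in $W$'' to ``nullhomotopic in $Y$'': one needs the relative homotopy groups to vanish after tensoring with $\mathbb{Q}$, which holds by construction of $W$, and then to absorb the residual finite ambiguity into the constant $C$ — this is where the hypothesis is stated with $L/C$ on the $Z$ side rather than $L$.
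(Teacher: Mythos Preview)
Your overall architecture—zig-zag through a common $W$, use Lemma~\ref{lem:Qlift} to move across each rational equivalence—matches the paper's. But the final step, pushing the assembled $W$-nullhomotopy of $g$ back into $Z$, has a genuine gap. Lemma~\ref{lem:Qlift} does not just ``push maps into the subspace''; its hypothesis requires that the map already be homotopic rel $A$ to a map landing in $Z$. If you take $A=X\times\{0\}$ only, the hypothesis is satisfied trivially (retract the cylinder to $g$), but the output $G'$ is then merely a map $K\to Z$ extending $g$, with $G'|_{X\times\{3\}}$ an $O(M)$-Lipschitz map homotopic to $g$, not the constant map—so you have not produced a nullhomotopy, and trying to nullhomotope this end recreates the original problem at a worse Lipschitz constant. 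If instead you take $A=X\times\{0,3\}$, then the hypothesis asks whether your specific $W$-nullhomotopy $H_1\cdot F$ is homotopic rel both ends to a $Z$-nullhomotopy of $g$. Two nullhomotopies of $g$ differ by an element of $[SX,W]$, and you need this element to lie in the image of $[SX,Z]\to[SX,W]$. That map has finite cokernel (since $\pi_k(W,Z)$ is finite), but the cokernel need not be trivial, so the hypothesis can fail. The paper flags exactly this: ``There is no guarantee, however, that $F$ can be homotoped into $Z$ rel $X$, even in an uncontrolled way.''

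The paper's fix, which your sketch is missing, is to exploit the finiteness of this cokernel directly: choose once and for all a representative $F_\gamma:SX\to Y$ for each coset $\gamma$, and splice the appropriate $F_{-[H]}$ into the last collar of $K$ before invoking Lemma~\ref{lem:Qlift}. This corrects the obstruction at a cost bounded by $\max_\gamma \Lip F_\gamma$, which is a constant depending only on $X,Y,Z$. (Also, your remark about building $W$ by ``attaching cells \ldots\ so as to kill the relative homotopy groups'' is misleading: you cannot in general make $\pi_k(W,Z)=0$ by attaching cells to $W$, since that cannot enlarge the image of $\pi_{k-1}(Z)\to\pi_{k-1}(W)$; finiteness of $\pi_k(W,Z)$ is automatic from the rational equivalence and is all you actually get.)
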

The point of introducing the complex $K$ is to prescribe a metric on the cylinder
$X \times [0,3]$.  The theorem then says that, under any such metric, sizes of
homotopies do not depend very much on torsion in the target space.  For example,
this controls the sizes of nullhomotopies through a camel with two humps, as in
the figure.  In the main application of this theorem to the proof of Theorem
\ref{intro:main}, $K$ will be a straight, but elongated cylinder whose length
depends on the the Lipschitz constant.
\begin{proof}
  Since $Y$ and $Z$ are rationally homotopy equivalent, there is a finite
  complex $W$ and a pair of maps $Y \to W \leftarrow Z$ which induce
  equivalences on the level of rational homotopy.   A proof for this is given,
  for example, in \cite[Lemma 1.3 and Cor.~1.9]{m}.  Thus we can assume that $Y$
  is a subcomplex of $Z$ or vice versa.

  We first do the case when $Y \subset Z$.  Let $C(n,Y,Z)$ be the constant given
  in Lemma \ref{lem:Qlift}.  Suppose $g:X \to Z$ is a nullhomotopic
  $L/C$-Lipschitz map, which we can assume to be simplicial on a subdivision
  $X_L$ at scale $\sim C/L$.  In particular, $g$ can be homotoped into $Y$, and
  so by Lemma \ref{lem:Qlift} this can be done via a short homotopy $H:X_L
  \times [0,1] \to Z$.  Now, $f(x):=H(x,1)$ is an $L$-Lipschitz nullhomotopic
  map $X \to Y$, and so there is an $M$-Lipschitz nullhomotopy $F:K \to Y$ of
  $f$.  Concatenating $H$ and $F$ gives an $(M+C)$-Lipschitz nullhomotopy
  $G:K \to Z$ of $g$.  This completes the first case.

  Now suppose $Z \subset Y$, and suppose $g:X \to Z$ is a nullhomotopic
  $L$-Lipschitz map.  By assumption, there is an $M$-Lipschitz nullhomotopy
  $F:K \to Y$ of $g$ (as a map to $Y$) and an uncontrolled nullhomotopy
  $\tilde G:CX \to Z$ of $g$.  There is no guarantee, however, that $F$ can be
  homotoped into $Z$ rel $X$, even in an uncontrolled way.

  On the other hand, concatenating $F$ and $\tilde G$ along $g$ gives us a map
  $H:SX \to Y$.  Homotopy classes of such maps form a group, and the induced
  mapping $[SX,Z] \to [SX,Y]$ is a homomorphism.  We would like to analyze the
  cokernel of this homomorphism; to do this, we use obstruction theory on the
  relative Postnikov tower
  \begin{center}
    \begin{tikzpicture}
      \node (x) at (0,0) {$Y$};
      \node (x0) at (3,0) {$P_1$};
      \node (x0prime) at (4.1,0) {$=P_0=Z$};
      \node (x1) at (3,1) {$P_2$};
      \node (vd) at (3,2) {$\vdots$};
      \node (xn) at (3,3) {$P_n$};
      \draw[->] (x) -- (x0) node[near end,anchor=south,inner sep=1pt]
           {$\ph_0=\ph$};
      \draw[->] (x) -- (x1) node[midway,anchor=south,inner sep=2pt] {$\ph_n$};
      \draw[->] (x) -- (xn) node[midway,anchor=south east,inner sep=1pt]
           {$\ph_n$};
      \draw[->] (xn) -- (vd) node[midway,anchor=west] {$p_n$};
      \draw[->] (vd) -- (x1) node[midway,anchor=west] {$p_3$};
      \draw[->] (x1) -- (x0) node[midway,anchor=west] {$p_2$};
    \end{tikzpicture}
  \end{center}
  of the inclusion $\ph:Y \hookrightarrow Z$.  Here, $P_k$ is a space such that
  $\pi_i(P_k,Y)=0$ for $i \leq k$ and $\pi_i(Z,P_k)=0$ for $i>k$.  The map $p_k$
  therefore only has one nonzero relative homotopy group, $\pi_k(Z,Y)$.  In this
  setting there is an obstruction theory long exact sequence of groups
  $$\cdots \to H^{k-1}(X;\pi_k(Z,Y)) \to [SX,P_k] \to [SX,P_{k-1}] \to
  H^k(X;\pi_k(Z,Y)) \to \cdots.$$
  Thus the cokernel we are interested in has cardinality at most
  $\prod_{i=1}^n \lvert H^k(X;\pi_k(Z,Y)) \rvert$.  For each element $\gamma$ of
  this cokernel, choose a map $F_\gamma:SX \to Y$ representing it.

  Now let $R \subset K=X \times (5/2,3]$.  Then there is an obvious $2$-Lipschitz
  homeomorphism $\psi_1:K \setminus R \to K$ which is the identity outside
  $X \times [2,3]$.  Also, let $\psi_2:\bar R \to SX$ be the surjection which
  contracts $X \times \{5/2\}$ and $X \times \{3\}$.  Then the map
  $$\tilde F(x)=\left\{\begin{array}{l l}
  F \circ \psi_1(x) & x \in K \setminus R \\
  F_{-[H]} \circ \psi_2(x) & x \in R
  \end{array}\right.$$
  gives us a nullhomotopy of $g$ which can be homotoped into $Z$ and which is
  $(C_0M+C_0)$-Lipschitz, where $C_0$ depends only on the geometry of the various
  $F_\gamma$.

  Finally, we can use Lemma \ref{lem:Qlift} to ensure that we get a
  $(CM+C)$-Lipschitz nullhomotopy $G:K \to Z$, where $C$ is the product of $C_0$
  and the constant from the lemma.
\end{proof}

\section{Proof of the main theorem}

Putting together Theorems \ref{thm:Qinv} and \ref{thm:lvl2}, we can now prove
Theorem \ref*{intro:main}.  We restate this theorem equivalently below:
\begin{thm*}
  Let $X$ be an $n$-dimensional finite complex, and let $Y$ be a finite complex
  which is rationally equivalent up to dimension $n$ to the total space of an
  induced fibration whose fiber and base are both products of simply connected
  Eilenberg--MacLane spaces.   Then there is a constant $C(X,Y)$ such that
  nullhomotopic $L$-Lipschitz maps from $X$ to $Y$ admit nullhomotopies of
  thickness $C(L+1)$ and width $C(L+1)^2$.
\end{thm*}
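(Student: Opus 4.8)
The plan is to combine the two main ingredients assembled in Sections 4 and 5. By hypothesis, $Y$ is rationally equivalent up to dimension $n$ to the total space $Y_0$ of an induced fibration whose fiber and base are both products of simply connected Eilenberg--MacLane spaces. First I would set up the geometric bookkeeping: replace $Y_0$ by a finite complex which fits the precise hypotheses of Theorem~\ref{thm:lvl2}, i.e.\ express it as (a finite $(N+1)$-connected subcomplex of) the total space of a $K(\mathbb{Z},n')$-fibration over a product $B=\prod_i B_i$ of ``nice'' approximations of Eilenberg--MacLane spaces. Here one must be slightly careful: a product of $K(\mathbb{Z},m_j)$'s in the fiber is an \emph{iterated} fibration, not a single one, but since the base is a product of Eilenberg--MacLane spaces, the fibration is induced from a map to $\prod_j K(\mathbb{Z},m_j+1)$, and one can peel off the fiber factors one at a time, each time applying Theorem~\ref{thm:lvl2} with the previously-handled factors absorbed into the base. (Alternatively, one observes that Theorem~\ref{thm:lvl2} is already stated for a $K(\mathbb{Z},n)$-fibration over an arbitrary such product $B$, and the case of a product fiber reduces to finitely many applications; the constants multiply but stay finite.) The upshot is: \emph{any} nullhomotopic $L$-Lipschitz map $X\to Y_0$ admits a nullhomotopy of thickness $C(X,Y_0)(L+1)$ and width $C(X,Y_0)(L+1)^2$.

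Next I would invoke rational invariance. Since $Y$ and $Y_0$ agree rationally up to dimension $n$, and $X$ is $n$-dimensional, one may replace $Y_0$ by $Y$ up to dimension $n$ without affecting which maps from $X$ are nullhomotopic or the relevant obstruction data; more precisely, attaching cells of dimension $>n+1$ to kill the difference in higher homotopy does not change $[X,-]$ or the relevant homotopy-class sets of homotopies. So we may assume $Y$ and $Y_0$ are honestly rationally homotopy equivalent finite simply connected complexes (after the standard trick, as in \cite[Lemma 1.3, Cor.~1.9]{m}, of passing through a common complex $W$). Now apply Theorem~\ref{thm:Qinv} with the roles $Y\rightsquigarrow Y_0$ (the space for which we have good bounds) and $Z\rightsquigarrow Y$ (the space we want bounds for), taking the metric complex $K$ to be the straight cylinder $X\times[0,3]$ with the collars $X\times([0,1]\cup[2,3])$ carrying a \emph{length-$(L+1)^2$-rescaled} simplicial structure in the time direction (so that a width-$O((L+1)^2)$ homotopy becomes $O(1)$-Lipschitz on it, while the thickness contributes the factor $L+1$). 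Feeding in the bound ``$M = C(X,Y_0)(L+1)$ for thickness and $O((L+1)^2)$ for width'' — equivalently an $M$-Lipschitz nullhomotopy on the appropriately rescaled $K$ with $M=O(L+1)$ — Theorem~\ref{thm:Qinv} produces a $(CM+C)=O(L+1)$-Lipschitz nullhomotopy $G:K\to Y$, which unwinds to a nullhomotopy of $X\times[0,1]\to Y$ of thickness $C'(L+1)$ and width $C'(L+1)^2$.

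The step I expect to be the main obstacle is the \emph{matching of metric conventions} between the output of Theorem~\ref{thm:lvl2} and the hypotheses of Theorem~\ref{thm:Qinv}. Theorem~\ref{thm:lvl2} gives a homotopy with separate thickness and width bounds ($C(L+1)$ and $C(L+1)^2$), whereas Theorem~\ref{thm:Qinv} is phrased in terms of a single Lipschitz constant $M$ for a map out of a fixed metric complex $K$. To reconcile these, one must choose the simplicial structure on the time-collars of $K$ so that the time direction is subdivided into $\sim(L+1)^2$ edges — but $K$ in Theorem~\ref{thm:Qinv} is supposed to be \emph{fixed} independent of $L$. The resolution is to note that the statement of Theorem~\ref{thm:Qinv} allows an arbitrary fixed simplicial structure on $X\times[0,3]$ and the constant $C$ is uniform in $X$ (by the remark after Lemma~\ref{lem:Qlift}, the width contribution from the lifting lemma is a genuine constant), so one may apply it with $K=K_L$ the elongated cylinder of length $\sim(L+1)^2$; rescaling this back to unit length converts the $O(1)$-Lipschitz output into a homotopy of width $O((L+1)^2)$ and thickness $O(L+1)$ as desired. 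Checking that the camel-collar argument in Theorem~\ref{thm:Qinv} is genuinely insensitive to the length of the cylinder — i.e.\ that $C_0$ there depends only on the finitely many auxiliary maps $F_\gamma:SX\to Y$ and not on $L$ — is the one point requiring care, but it follows since those maps and the surjections $\psi_1,\psi_2$ live on a region of bounded size glued to the far end of $K$.
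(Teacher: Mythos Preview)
Your overall strategy matches the paper's: establish the bound for a model space $Z$ via Theorem~\ref{thm:lvl2}, then transfer to $Y$ via Theorem~\ref{thm:Qinv} applied to an $L$-dependent cylinder $K_L$. But there are two gaps.

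First, your ``peel off one fiber factor at a time, absorbing it into the base'' does not work: after one step the new base is a nontrivial $K(\mathbb{Z},m)$-bundle over $B$, not a product of Eilenberg--MacLane approximants, so the hypotheses of Theorem~\ref{thm:lvl2} no longer hold. The paper instead realizes $Z$ as the \emph{fiber product} over $B$ of the single-factor fibrations $Z_j\to B$, constructs \emph{one} nullhomotopy $F:X\times I\to B$, lifts it via Theorem~\ref{thm:lvl2} to each $Z_j$ separately, and observes that the tuple $(\tilde F_1,\ldots,\tilde F_s)$ lands in $Z$ precisely because all the $\tilde F_j$ project to the same $F$. Your ``alternatively, finitely many applications'' may be gesturing at this, but the essential point---parallel lifts over a common base map, not serial lifts---is absent.

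Second, and more concretely wrong: taking $K_L$ of time-length $\sim(L+1)^2$ loses a factor of $L$. On such a $K_L$ the input nullhomotopy is $O(L+1)$-Lipschitz (dominated by the space direction), so Theorem~\ref{thm:Qinv} as a black box yields $G:K_L\to Y$ which is $O(L+1)$-Lipschitz \emph{overall}---in particular $O(L+1)$-Lipschitz in time, not $O(1)$. Compressing a length-$(L+1)^2$ cylinder back to $X\times[0,1]$ then gives width $O((L+1)^3)$. The paper takes $K_L=X\times[0,L]$: then both thickness $C(L+1)$ and width $C(L+1)^2$ become $O(L+1)$-Lipschitz on $K_L$, Theorem~\ref{thm:Qinv} outputs an $O(L+1)$-Lipschitz $G$, and compressing by the factor $L$ (not $L^2$) gives width $O((L+1)^2)$. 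Your last paragraph checks that the constant $C$ in Theorem~\ref{thm:Qinv} is uniform in the cylinder length, which is correct and necessary, but it is a separate issue; even with $C$ uniform, it is the compression factor that governs the final width.
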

Let's unwrap a bit the rational homotopy theory of the spaces that we are
considering, particularly the word \emph{induced}. 
	
The basic fact that underlies everything is that $H^*(K(\mathbb{Q},k);\mathbb{Q})
=\mathbb{Q}[x]$ if $k$ is even and is $\mathbb{Q}[x]/(x^2=0)$ if $k$ is odd.
Note that both cases can be described as saying that the cohomology is the free
graded-commutative differential algebra on a $k$-dimensional class.  In light of
Kunneth’s theorem, we can now say that if $V$ is a graded vector space, and
$K(V)$ is a product of Eilenberg-MacLane spaces $K(V_k,k)$, then the rational
cohomology of $K(V)$ is the free graded algebra $\mathbb{Q}[V]$.
	
Notice of course, that if $X$ is a space whose rational cohomology is a free DGA,
then by considering the generating cohomology classes as maps into
Eilenberg--MacLane spaces, we get a map into a product of such spaces, i.e. a map
$X \to K(V)$ which is a rational isomorphism.
	
A special case is the even dimensional sphere.  $S^{2k} \to K(\mathbb{Z}, 2k)$ is
a tautological map.  However, the cup square vanishes for the sphere for
dimensional reasons, so this tautological map lifts naturally to the homotopy
fiber of the map $K(\mathbb{Z}, 2k) \xrightarrow{\cup 2} K(\mathbb{Z}, 4k)$.  The
map to this fiber is a rational equivalence, as seen using the fact that the
Euler class of the rational fibration sequence
$$S^{4k-1}_{(0)}=K(\mathbb{Q},4k-1) \to S^{2k}_{(0)} \to K(\mathbb{Q}, 2k)$$
is cup square together with the Gysin sequence.

More general homogeneous spaces $G/H$ have similar structure (after some work!)
The inclusion $H \to G$ is a homomorphism, and therefore induces a map 
$BH \to BG$ whose fiber is easily seen to be $G/H$.  For any connected Lie group
$K$, the cohomology is a free algebra, i.e., since $K$ is finite-dimensional, it is the cohomology of a product of odd spheres; by a theorem of Hopf, this product
is in fact homotopy equivalent to $K$ (see e.g.~Example 3 of
\cite[\S12(a)]{FHT}).  That the cohomology of $BK$ is also free is less obvious,
but is also classical; this cohomology can be described using the Lie algebra of
$K$.  This gives rise to a description of the map $BH \to BG$, which also shows
that the map $G/H \to BH$ is (up to homotopy) an induced fibration, a notion we
now explain in our setting.  A proof of this can be found in \cite[\S15(f)]{FHT}.

Suppose now that we have two graded $\mathbb{Q}$--vector spaces $V$ and $W$.  To
describe a map $f:K(V) \to K(W)$ is the same thing as describing a graded
homomorphism $W \to \mathbb{Q}[V]$.  The fiber $F$ of this map has a description
via a fibration
$$K(W, [-1]) \to F \to K(V).$$
(where the $[-1]$ indicates a shift in grading by $-1$), but it is not the most
general such fibration; we say, following \cite{Ganea}, that it is
\emph{induced} (by the map $K(V) \to K(W)$.  In the general case, the structure
group would be a space of self-homotopy equivalences of $K(W,[-1])$, but here we
are only allowing $K(W,[-1])$ itself, acting on itself as a topological group.
The classifying space of $K(W, [-1])$ is, of course $K(W)$.

In this case, the free algebra generated by $W$ with the shifted grading together
with $V$, equipped with the differential given by $dw=f^*w$, is a DGA model for
the space $F$.  A minimal model for this DGA is obtained by deleting pairs of
indecomposables (that is, elements of $W$ and $V$) $(g,h)$ with $dg=h$.
Conversely, given such a minimal model, we can construct an induced fibration
using the recipe above.  This shows that this condition is equivalent to that in
the introduction.

Moreover, by choosing a lattice $V_{\mathbb{Z}} \subset V$ and a lattice in $W$
whose differential lands in $\mathbb{Q}[V_{\mathbb{Z}}]$, one constructs an induced
fibration of this form whose homotopy groups are all free abelian and whose total
space has finite skeleta.  We use this construction in the proof below.

Thus the space $Y$ in Theorem \ref{intro:main} can be any simply-connected
homogeneous target space, including spheres, complex projective spaces, and
Grassmannians.  Another corollary concerns maps to spaces which are highly
connected.  The first part is a result from \cite{cob}.
\begin{cor} \label{cor:highly_connected}
  Let $Y$ be a rationally $(k-1)$-connected finite complex and $X$ an
  $n$-dimensional finite complex.
  \begin{enumerate}[label=(\alph*),leftmargin=2em]
  \item If $n \leq 2k-2$, then there is a constant $C(X,Y)$ such that homotopic
    $L$-Lipschitz maps from $X$ to $Y$ admit $C(L+1)$-Lipschitz homotopies.
  \item If $2k-1 \leq n \leq 3k-3$, then there is a constant $C(X,Y)$ such that
    nullhomotopic $L$-Lipschitz maps admit nullhomotopies of thickness
    $C(L+1)$ and width $C(L+1)^2$.
  \end{enumerate}
\end{cor}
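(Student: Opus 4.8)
The plan is to deduce part (b) directly from Theorem~\ref{intro:main}; part (a) is already available from \cite{cob} (the linear statement for actual homotopies going back to the $q=0$ case of \cite{FWPNAS}), so nothing new is needed there. The only thing to verify is that rational $(k-1)$-connectedness, together with the dimension bound $n \leq 3k-3$, forces the indecomposables of the Sullivan minimal model of $Y$ \emph{in degrees $\leq n$} to split in precisely the manner Theorem~\ref{intro:main} demands.

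First I would fix a minimal model $(\Lambda V,d)$ of $Y$ and record the two standard facts I will use: since $Y$ is rationally $(k-1)$-connected, the indecomposables $V$ are concentrated in degrees $\geq k$ (and since the ranges in the statement are empty unless $k \geq 2$, we may assume $k \geq 2$, so $Y$ is in particular rationally simply connected); and by minimality, $dv$ is decomposable for every $v \in V$, i.e.\ a sum of products of at least two generators. Then I would set
$$V_1 = \{v \in V : \deg v \leq 2k-2\}, \qquad V_2 = \{v \in V : 2k-1 \leq \deg v \leq n\},$$
so that $V_1 \oplus V_2$ is exactly the space of indecomposables of $\mathcal{M}^*(Y)$ in dimensions $\leq n$.

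The verification is then a short degree count. For $v \in V_1$ the element $dv$ has degree $\deg v + 1 \leq 2k-1$, whereas any nonzero product of two or more generators (each of degree $\geq k$) has degree $\geq 2k$; hence $dv = 0$, i.e.\ $dV_1 = 0$. For $v \in V_2$ the element $dv$ has degree $\deg v + 1 \leq n+1 \leq 3k-2 < 3k$, too small to contain a product of three generators, so every term of $dv$ is a product of exactly two generators $v',v''$; then $\deg v' = \deg(dv) - \deg v'' \leq (n+1) - k \leq 2k-2$, and symmetrically $\deg v'' \leq 2k-2$, so both lie in $V_1$. Thus $dV_2 \subseteq \mathbb{Q}\langle V_1\rangle$, and Theorem~\ref{intro:main} applies, producing the nullhomotopy of thickness $C(L+1)$ and width $C(L+1)^2$ asserted in (b).

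Finally I would add a remark placing this in the induced-fibration language introduced above: the triple $(V_1, V_2, d|_{V_2})$ exhibits the rational homotopy type of $Y$ through dimension $n$ as the total space of the induced fibration $K(V_2,[-1]) \to F \to K(V_1)$ classified by the map $K(V_1) \to K(V_2)$ sending the generator corresponding to $v \in V_2$ to $dv \in \mathbb{Q}\langle V_1\rangle = H^*(K(V_1);\mathbb{Q})$, all Eilenberg--MacLane factors being simply connected since $k \geq 2$; and specializing to $n \leq 2k-2$ forces $V_2 = 0$, so through dimension $n$ the space $Y$ is rationally a product of Eilenberg--MacLane spaces and (a) is the $q \leq 1$ case of Conjecture~\ref{conjNull} proved in \cite{FWPNAS, cob}. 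I do not expect a genuine obstacle here: the one subtle point is that the splitting is claimed only for indecomposables of degree $\leq n$, and it is exactly this cutoff --- which caps $\deg(dv)$ at $n+1 \leq 3k-2$ --- that forces every term of $dv$ to be a two-fold product of generators of degree $\leq 2k-2$; without the degree restriction no such splitting need exist.
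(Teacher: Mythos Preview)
Your proposal is correct and matches the paper's (entirely implicit) argument: the paper does not write out a proof of this corollary at all, merely attributing part (a) to \cite{cob} and leaving part (b) as an unstated consequence of Theorem~\ref{intro:main}. Your degree count --- that rational $(k-1)$-connectedness forces generators to live in degree $\geq k$, so in degrees $\leq 2k-2$ the differential vanishes by minimality, and in degrees $\leq 3k-3$ the differential lands in quadratic products of low-degree generators --- is exactly the verification the paper leaves to the reader, and your fibration remark is consonant with the paper's discussion preceding the corollary.
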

\begin{proof}[Proof of Theorem \ref{intro:main}.]
  As discussed above, $Y$ is rationally equivalent up to dimension $n$ to the
  total space $Z$ of an induced fibration
  $$\prod_{j=1}^s K(\mathbb{Z},n_j) \to Z \to \prod_{i=1}^r K(\mathbb{Z},n_i).$$
  As noted before, we may assume that $Z$ is the fiber product of fibrations
  $K(\mathbb{Z},n_j) \to Z_j \to B$, where $B$ is as in Theorem \ref{thm:lvl2}
  and the $Z_j$ have finite skeleta.  Concretely, we can think of $Z$ as the
  pullback via the diagonal map $B \to B^s$ of the product fibration
  $$Z_1 \times \cdots \times Z_s \to B^s.$$
  Then if $f$ is a nullhomotopic map $X \to Z$, we can construct a nullhomotopy
  with the desired properties by finding a nullhomotopy $F$ in $B$, lifting it
  as in Theorem \ref{thm:lvl2} to $\tilde F_j:X \times I \to Z_j$ for each
  $1 \leq j \leq s$, and finally setting
  $$\tilde F(x,t)=(\tilde F_1(x,t), \ldots, \tilde F_s(x,t)) \in Z.$$
  Once we have shown the result for $Z$, it must hold for $Y$ by Theorem
  \ref{thm:Qinv}, as follows.  For a given $L$, we take $K_L=X \times [0,L]$.  We
  have shown that for any nullhomotopic $L$-Lipschitz $f:X \to Z$, there is a
  $CL$-Lipschitz nullhomotopy of $f$ in $K_L$.  Therefore, the same is true in
  $Y$, with a different constant.  Compressing $K_L$ back down to
  $X \times [0,1]$, we get back our separate estimates on thickness and width.
\end{proof}

\section{Some lower bounds}

It is first worth noting that nullhomotopies of maps, for example, from $S^n \to
S^n$ and $S^3 \to \mathbb{C}\mathbf{P}^2$ cannot be done in constant time, as is
the case for targets with finite homotopy groups as in Theorem 1 of
\cite{FWPNAS}.  Thus the linear upper bound in Conjecture \ref{conjNull} when
$q=1$ is sharp.  All this is discussed in \cite{cob}.

One may ask then whether Theorem \ref{intro:main} similarly gives sharp bounds.
This boils down to two separate questions.  First, is the quadratic bound on the
width of the homotopy necessary, or could a linear bound suffice?  Secondly, can
the theorem be extended to homotopies rather than just nullhomotopies, as is the
case with the theorem in \cite{cob}?  It turns out that both of these features
are required.

\subsection{Maps that are hard to nullhomotope}
First, we give a series of examples (see also \cite{cob}) that show that for
every $q$, the upper bound in Conjecture \ref{conjNull} is the best one possible
in general.  In particular, we show that it gives a sharp estimate on the minimum
volume of a nullhomotopy in certain cases; this can potentially be apportioned to
the width and thickness in other ways.  Let the space $Y_q$ be given by
$S^2 \vee S^2$ together with $(q+3)$-cells whose attaching maps form a basis for
$\pi_{q+2}(S^2 \vee S^2) \otimes \mathbb{Q}$.  Note that by rational homotopy
theory, $\pi_{*+1}(S^2 \vee S^2) \otimes \mathbb{Q}$ is a free graded Lie algebra
on two generators of degree 1 whose Lie bracket is the Whitehead product.  In
particular, if $f$ and $g$ are the identity maps on the two copies of $S^2$, the
iterated Whitehead product
$$h_1=[f,[f,\ldots[f,g]\ldots]]:S^{q+2} \to S^2 \vee S^2,$$
with $f$ repeated $q$ times, represents a nonzero element of
$\pi_{q+2}(S^2 \vee S^2)$.  Moreover, the map
$$h_L=[L^2f,[L^2f,\ldots[L^2f,L^2g]\ldots]]:S^{q+2} \to S^2 \vee S^2$$
is an $O(L)$-Lipschitz representative of $L^{2q+2}[h_1]$.  Thus in $Y_q$, we can
define a nullhomotopy $H$ of $h_L$ by first homotoping it inside
$S^2 \vee S^2$ to $h_1 \circ \ph_{2q+2}$ for some map $\ph_{2q+2}:S^{q+2} \to S^{q+2}$
of degree $L^{2q+2}$, and then nullhomotoping each copy of $h_1$ via a standard
nullhomotopy.

Since $h_1$ is not nullhomotopic in $S^2 \vee S^2$, this standard nullhomotopy
must have degree $C \neq 0$ (in the sense of relative homology) on at least one
of the $(q+3)$-cells, giving a closed $(q+3)$-form $\omega$ on $Y$ such that
$\int_{S^{q+2} \times I} \omega^*H=L^{2q+2}C$.  Now, suppose $H^\prime$ is some other
nullhomotopy of $h_L$.  Then gluing $H$ and $H^\prime$ along the copies of
$S^{q+2} \times \{0\}$ gives a map $p:S^{q+3} \to Y$.  Since the Hurewicz map sends
$\pi_{q+3}(Y_q)$ to zero, the total degree of $p$ on cells must be zero.  This
shows that $p$ must have degree zero on cells, in other words, that
$\int_{S^{q+2} \times I} \omega^*H^\prime=L^{2q+2}C$.  Thus the volume of a nullhomotopy
of $h_L$ grows at least as $L^{2q+2}$.

In particular, a nullhomotopy $F:S^{q+2} \times [0,1] \to Y_q$ of $h_L$ which has
thickness $\sim L$ has to have width $\sim L^q$.

Now, the rational homotopy groups of $Y_q$ are given by the free Lie algebra on
two generators truncated in degree $q+1$.  A standard computation shows that
differentials of $r$-dimensional generators in the corresponding minimal model
are multiples of the $(r-1)$-dimensional generators.  Therefore, the minimal
depth of the filtration in Conjecture \ref{conjNull} in this case is $q$,
demonstrating that this is in some sense the ``best possible'' conjecture.  In
particular, the bound in Theorem \ref{intro:main} is sharp in at least some
cases.  On the other hand, it is still open whether this quadratic bound is
sharp, for example, for maps $S^3 \to S^2$.

\subsection{Maps that are hard to homotope}
To see that general homotopies do not always behave like nullhomotopies, we
consider maps $S^3 \times S^4 \to S^4$.  Any map $f:S^3 \times S^4 \to S^4$
induces a homomorphism of minimal DGAs
$$\langle x^4, y^7:dx=0,dy=x^2 \rangle \xrightarrow{f^*} \langle a^3, b^4, c^7:
da=db=0, dc=b^2 \rangle$$
which must send $x \mapsto pb$ and $y \mapsto p^2c+qab$ for some $p,q \in
\mathbb{Q}$.  Conversely, for any $p,q \in \mathbb{Z}$ we can define a map
$f_{p,q}$ as illustrated in Figure \ref{fig:s3s4} such that $f_{p,q}^*$ sends $x
\mapsto pb$ and $y \mapsto p^2c+qab$.  This follows from the action of the first
map on cohomology and of the second on homotopy groups.
\begin{figure} 
	\centering
  \begin{tikzpicture}
    \useasboundingbox (-2,-2.5) rectangle (7.6,1);
    \draw (1,1) .. controls (-2,.5) and (-.5,-1.5) .. (1,1);
    \node(S4) at (.1,.4){$S^3$};
    \draw (1,1) .. controls (.8,-2.5) and (4.5,1.2) .. (1,1);
    \node(S4) at (1.8,.3){$S^4$};
    \draw (-.65,-.18) .. controls (0,-.7) and (-.1,-1) .. (-.7,-.8);
    \draw (-.7,-.8) .. controls (-3.5,.4) and (-1.6,-3.5) .. (-.6,-1.5);
    \draw (1.8,-.5) .. controls (.2,-.8) and (0,-.8) .. (-.6,-1.5);
    \draw[->] (2,1) .. controls (2.5,1.2) and (3,1.2) .. (3.7,.8);
    \draw[->] (-.7,-1.8) .. controls (1,-2.1) and (2,-1.9) .. (3.1,-1.6);
    \draw (5,-.5) .. controls (5.2,3) and (1.5,-.5) .. (5,-.5);
    \node(S4) at (4.2,.3){$S^4$};
    \draw (5,-.5) .. controls (4.6,-4) and (1,-.5) .. (5,-.5);
    \node(S4) at (4,-1.3){$S^7$};
    \draw[->] (4.9,.8) .. controls (5.5,1) .. (6.15,.65)
      node[pos=0.5, anchor=south]{degree $p$};
    \draw[->] (4.7,-1.7) .. controls (5.1,-2) .. (6.15,-.65)
      node[pos=0.7, anchor=north west, align=center]{Hopf\\invariant $q$};
    \draw (6.8,0) circle [radius=.8] node{$S^4$};
  \end{tikzpicture}
  
  \caption{
    Construct maps $S^3 \times S^4 \to S^4$ by ``budding off'' a small ball and
    then projecting the rest onto the $S^4$ factor.
  } \label{fig:s3s4}
\end{figure}
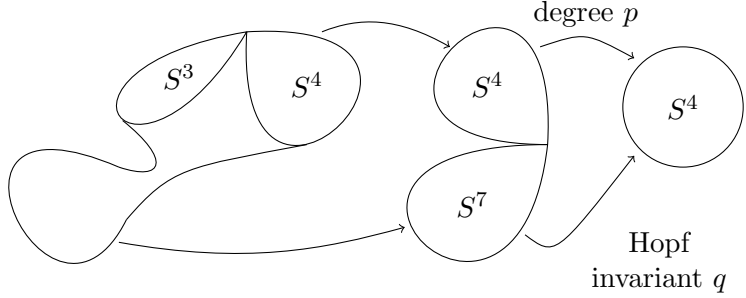

Now, given $p \neq 0$, for any $q_1,q_2 \in \mathbb{Q}$ there is a homotopy of
DGA homomorphisms
$$\langle x^4, y^7:dx=0,dy=x^2 \rangle \to \langle a^3, b^4, c^7:da=db=0, dc=
b^2 \rangle \otimes \langle t^0,dt^1 \rangle$$
between $f^*_{p,q_1}$ and $f^*_{p,q_2}$, given by
\begin{align*}
  x &\mapsto pb+\frac{q_2-q_1}{2p}adt\\
  y &\mapsto p^2c+q_1ab(1-t)+q_2abt.
\end{align*}
This suggests that, at least up to a finite order difference, $f_{p,q_1} \simeq
f_{p,q_2}$.

Indeed, one can see more geometrically that two such maps are homotopic if the
number $\frac{q_2-q_1}{2p}$ is an integer.  For concreteness, suppose $p=1$ and
$q_1=0$.  A potential homotopy between $f_{1,0}$ and $f_{1,q}$ must factor through
the quotient space of $S^3 \times S^4 \times I$ where
$S^3 \times S^4 \times \{0\}$ is projected onto $S^4$ and $S^3 \times S^4 \times
\{1\}$ is mapped onto $S^4 \vee S^7$ as in Figure \ref{fig:s3s4}.  This quotient
space is easily seen to be homeomorphic to $S^4 \times S^4$ minus an open ball,
and thus homotopy equivalent to $S^4 \vee S^4$.  Let $\alpha$ and $\beta$ be the
homotopy classes of the identity maps on the copies of $S^4$, which are images
under the quotient map of $S^3 \times S^4 \times \{0\}$ and $S^3 \times*\times I$
respectively.  Since we know what happens on the ends of the interval, we see
that, if $h:S^4 \vee S^4 \to S^4$ is a map in the homotopy class of such a
homotopy, then $h_*\alpha=[\id_{S^4}]$ and
$$h_*[\alpha,\beta]=\frac{q}{2}[\id_{S^4},\id_{S^4}]=q[\text{Hopf}].$$
Indeed such a map $h$ exists, with $h_*\beta=\frac{q}{2}[\id_{S^4}]$.  In other
words, there is a homotopy $F:S^3 \times S^4 \times I \to S^4$ between $f_{1,0}$
and $f_{1,q}$, and any such homotopy satisfies
$\int_{S^3 \times * \times I} F^*d\vol=\frac{q}{2}$.

If we take $q=L^8$, the way we have defined $f_{1,q}$ gives it Lipschitz constant
$O(L)$.  On the other hand, we have just shown that a homotopy between $f_{1,L^8}$
and $f_{1,0}$ must have degree at least $L^8/2$ on the 4-dimensional submanifold
$S^3 \times * \times I \subset S^3 \times S^4 \times I$.  Thus such a homotopy
must have Lipschitz constant $\Omega(L^2)$, or, if it has linear thickness, it
must have width $\Omega(L^5)$.  Either way, it cannot possibly satisfy the bounds
of Theorem \ref{intro:main}, showing that the theorem cannot directly generalize
beyond nullhomotopies.

\bibliographystyle{amsalpha}
\bibliography{liphom}
\end{document}